\theoremstyle{plain}
\newtheorem{Theorem}{Theorem}[section]
\newtheorem{Lemma}[Theorem]{Lemma}
\newtheorem{conjecture}[Theorem]{Conjecture}
\newtheorem{Corollary}[Theorem]{Corollary}
\newtheorem{Example}[Theorem]{Example}
\theoremstyle{definition}
\newtheorem{Remark}[Theorem]{Remark}
\newtheorem{Definition}[Theorem]{Definition}
\begin{document}
\title{Looking for a continuous version of  Bennett--Carl theorem}

\author{Sergey V. Astashkin}
\address[Astashkin]{Department of Mathematics, Samara National Research University, Mos\-kov\-skoye shosse 34, 443086, Samara, Russia; Lomonosov Moscow State University, Moscow, Russia; Moscow Center of Fundamental and Applied Mathematics, Moscow, Russia; Department of Mathematics, Bahcesehir University, 34353, Istanbul, Turkey.}
\email{astash56@mail.ru}
\urladdr{www.mathnet.ru/rus/person/8713}

\author[Le\'snik]{Karol Leśnik}
\address[Le\'snik]{
Faculty of Mathematics and Computer Science, Adam Mickiewicz University in Pozna\'n,
ul. Uniwersytetu Pozna\'nskiego 4, 61-614 Pozna\'n, Poland}
\email{klesnik@vp.pl, karles5@amu.edu.pl}

\author[Wojciechowski]{Michał Wojciechowski}
\address[Wojciechowski]{%
Institute of Mathematics, Polish Academy of Sciences, ul. \'Sniadeckich 8, 00-656 Warsaw, Poland}
\email{miwoj@impan.pl}

\maketitle

\rightline{\it Dedicated to the memory of}
\rightline{\it Professor Albrecht Pietsch}
\rightline{\it (1934--2024)}
\vspace{5 mm}

\renewcommand{\thefootnote}{\fnsymbol{footnote}}

\footnotetext[0]{
2020 \textit{Mathematics Subject Classification}: 47B10, 46E30, 47B38, 46B70}
\footnotetext[0]{\textit{Key words and phrases}:  absolutely summing operators, rearrangement invariant spaces, Rademacher system, interpolation spaces}

\begin{abstract}
We study absolute summability of inclusions of r.i. function spaces. It appears that such properties are closely related, or even determined by absolute summability of inclusions of subspaces spanned by the Rademacher system in respective r.i. spaces. Our main result states that for $1<p<2$ the inclusion $X_p\subset L^p$ is $(q,1)$-absolutely summing for each $p<q<2$, where $X_p$ is the unique r.i. Banach function space in which the Rademacher system spans copy of $l^p$. This result may be regarded as a continuous version of  the well-known Carl--Bennett theorem. Two different approaches to the problem and extensive discussion on them  are presented. We also conclude summability type of a kind of Sobolev embedding in the critical case. 
\end{abstract}

\newcommand{\Exp}{{\rm Exp}\,}

\section{Introduction}

Origins of the theory of $p$--summing operators goes back to the work of Grothendieck from the 1950s, however, the precise definition and background results were proposed by Pietsch in 1967, while the ideas and their importance have been propagated by Lindenstrauss and Pełczyński \cite{LP68}.  

While the theory was widely developed and found a number of nontrivial applications in modern theory of Banach spaces, the number of concrete examples of absolutely summing operators is quite limited. 
As it is usually difficult to characterize summability properties of a given operator, a great part of the theory focuses on investigations on absolute summability of the simplest operator, i.e. the identity operator but between different Banach spaces. 
One of the most important result in this direction is the Bennett--Carl theorem which asserts that inclusion
$l^p\subset l^q$ is $(r,1)$-absolutely summing when $1\leq p\leq q\leq 2$ and $1/r=1/p-1/q+1/2$ (see \cite{Ca74,Be73}). It generalizes inequalities of Littlewood \cite{Li30}, Orlicz \cite{Or33} and Grothendieck \cite{Gro53}. 
On the other hand, it gave rise to further investigations of summability properties of inclusions of more general sequence spaces. In \cite{MM00} Maligranda and Mastyło proved an analogue of the Bennett--Carl theorem for Orlicz sequence spaces, while Defant, Mastyło and Michels generalized it to r.i. sequence spaces in a series of papers \cite{DMM01,DMM02,DMM02b}.  It is also worth to mention that  Bennett--Carl theorem was the main tool in  \cite{Woj97}, where the third author partially answered the question of Pełczyński about absolute summability of Sobolev embedding.

At the same time our knowledge about analogous properties of inclusions of function spaces is almost {\it carte blanche}.  
One reason for that may be that, in contrast to sequence spaces case, inclusions of Lebesgue  spaces  have much less attractive absolutely summing properties. It is caused mainly by their  richer  isomorphic structure. In fact, since the Rademacher system spans complemented copies of $l^2$ in $L^p[0,1]$ for $1<p<\infty$, it follows that  the inclusions
\[
L^p[0,1]\subset L^q[0,1]  \text{ for } 1<q<p<\infty 
\]
are at most of the same absolutely summing type as the inclusion $l^2\subset l^2$, i.e. $(2,1)$-absolutely summing. 
On the other hand, the inclusion $L^{\infty}[0,1]\subset L^p[0,1]$ is $p$-absolutely summing for each $1\leq p<\infty$ \cite{DJT95} (actually, by Pietsch factorization theorem, this inclusion is a kind of archetype for all  $p$-absolutely summing mappings). However,  the Rademacher system spans $l^1$ in $L^{\infty}[0,1]$ and, consequently,  the embedding $L^{\infty}[0,1]\subset L^1[0,1]$ restricted to Rademacher subspaces is equivalent to the embedding $l^1\subset l^2$, thus $1$-absolutely summing. 
This suggests that absolute summability of inclusion of two r.i. spaces is closely related, or even determined, by summability of the respective inclusion of their subspaces spanned by the Rademacher system.
Moreover, above examples indicate that absolute summability of the inclusion $X\subset L^p$ should strengthen when the function space $X$ is in a certain sense close enough to $L^{\infty}$.
More precisely, we will be interested in r.i. spaces $X$ between $L^{\infty}$ and $G$, where by  $G$ is the Rodin--Semenov space, i.e. the smallest r.i. space in which Rademacher system $(r_n)$ spans $l^2$. Thus, if $X$ is strictly smaller than $G$, the Rademacher system $(r_n)$ spans  in $X$ another sequence space than $l^2$ and one may expect better summablility of the inclusion $X\subset L^p$. Furthermore,  according to \cite{As10,As20}, for each r.i. sequence space $E\not = l^2$ which is  an interpolation space for the couple  $(l^1,l^2)$, there is exactly one  interpolation r.i. function space $X_E$ between  $L^{\infty}$ and $G$ such that $(r_n)$ spans the copy of $E$ in $X_E$.  Consequently, the natural question  arises, if the inclusion 
\[
X_E\subset L^p
\]
has the same  absolute summability type as has  that of the corresponding sequence spaces, i.e.
\[
E\subset l^2?
\]
In another words, summarizing the above discussion, we consider the question whether $s=t$ in the diagram below
\[
\begin{tikzcd}
L^{\infty}\arrow[hook]{d} \arrow[hook]{r}{(1,1)} & L^1  & l^1\arrow[hook]{d} \arrow[hook]{r}{(1,1)} & l^2\arrow[equal]{d}\\
X_E \arrow[hook]{d} \arrow[hook]{r}{(s,1)} & L^p \arrow[hook]{u} & E \arrow[hook]{d} \arrow[hook]{r}{(t,1)} & l^2\arrow[equal]{d} \\
G\arrow[hook]{r}{(2,1)} &L^2 \arrow[hook]{u}& l^2  \arrow[hook]{r}{(2,1)} & l^2 \\
\end{tikzcd}
\]
\centerline {Diagram 1}
\vspace{3mm}

In the paper we focus mainly on the case of $E=l^p$, $1<p<2$. Denoting $X_p:=X_{l^p}$, we investigate summability of the inclusion $X_p\subset L^p$ 
(optimality of the choice of the bigger space will be clear in the sequel). Then on the right diagram the inclusion $l^p\subset l^2$ is $(p,1)$-absolutely summing thanks to the Bennett--Carl theorem. Thus the question is whether  $X_p\subset L^p$  is also $(p,1)$-summing? Although we are not able to resolve the problem completely, our main result gives a partial answer to it.

\begin{Theorem}\label{main}
Let $1<p<2$. Then the inclusion $X_p\subset L^p$ is $(q,1)$ - absolutely summing for each $p<q<2$.  
\end{Theorem}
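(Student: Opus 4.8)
The plan is to obtain Theorem~\ref{main} by interpolating the two outer rows of Diagram~1, reading the inclusion $X_p\subset L^p$ as the identity operator acting on a compatible pair of couples. Fix $\theta\in(0,1)$ by $1/p=1-\theta/2$ (so $\theta=2/p'$). On the target side this is the classical identity $L^p=[L^1,L^2]_\theta$. On the source side, Astashkin's theorem presents $X_p$ as an interpolation r.i. space for the couple $(L^\infty,G)$; since the Rademacher system spans $\ell^1$ in $L^\infty$ and $\ell^2$ in $G$, and $[\ell^1,\ell^2]_\theta=\ell^p$ for this same $\theta$, the uniqueness part of that theorem identifies $X_p$ with $[L^\infty,G]_\theta$ (equivalently, with the Calderón product $(L^\infty)^{1-\theta}G^{\theta}$). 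Crucially, the inclusions $L^\infty\subset L^1$ and $G\subset L^2$ are restrictions of one and the same identity $I$ on measurable functions, so $I$ acts simultaneously on $(L^\infty,G)$ and $(L^1,L^2)$; complex interpolation already recovers the boundedness of $I:X_p\to L^p$, and the whole problem is to upgrade this to a summability statement.

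First I would record the two endpoints supplied by Diagram~1: the inclusion $L^\infty\subset L^1$ is $(1,1)$-summing and the inclusion $G\subset L^2$ is $(2,1)$-summing. Were the operator ideal $\Pi_{q,1}$ to interpolate exactly along the complex scale, applying it to the single operator $I$ on the couple $(L^\infty,G)\to(L^1,L^2)$ would yield that $I:X_p\to L^p$ is $(q_\theta,1)$-summing with
\[
\frac{1}{q_\theta}=(1-\theta)\cdot 1+\theta\cdot\tfrac12=1-\tfrac{\theta}{2}=\frac1p,
\]
that is, the full conjectural equality $s=p$ in the diagram. This heuristic shows what governs the answer and pinpoints where a proof must be careful.

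The hard part---and the reason the theorem is stated with $q>p$ instead of $q=p$---is that the $(q,1)$-summing ideal does \emph{not} interpolate on the nose when the lower index is pinned at $1$. Interpolating membership in $\Pi_{q,1}$ along the $\theta$-scale does not return a genuine $\Pi_{p,1}$ norm but only a weak, Lorentz-type summing estimate at the endpoint exponent $p$; one then recovers an honest $(q,1)$-summing inclusion only after relaxing the first index, via the standard embedding of such a Lorentz summing class into $\Pi_{q,1}$ for every $q>p$. Concretely, I would make this loss explicit by testing the weak $\ell^1$-norm of a finite family $(f_k)\subset X_p$ after splitting each $f_k$ over the dyadic level sets of its distribution, applying the $L^\infty\subset L^1$ inequality where $X_p$ behaves like $L^\infty$ and the $G\subset L^2$ inequality on the bulk, and summing the resulting series in the level parameter. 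That series converges precisely when $q>p$, which is exactly the gap in the statement; closing it to $q=p$ is the genuinely delicate borderline and is where I expect any direct attack to stall.

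An alternative, more hands-on route follows the right-hand column of Diagram~1 and transfers Bennett--Carl directly through the Rademacher system: one estimates the $(q,1)$-summing constant of $X_p\subset L^p$ by comparing an arbitrary weakly $1$-summable family against random $\pm1$ combinations, using that $(r_n)$ spans $\ell^p$ in $X_p$ together with Khintchine's inequality in $L^p$, and invoking Bennett--Carl for $\ell^p\subset\ell^2$ at the coefficient level. I would nonetheless pursue the interpolation argument as the main line, since it confines the entire difficulty to a single transparent step---the non-interpolation of $\Pi_{q,1}$---and makes clear that the critical case $q=p$ is the true obstruction rather than an artifact of the method.
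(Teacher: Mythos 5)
Your argument rests on two genuine gaps, and the first is a wrong identification that everything else depends on. The complex method (equivalently, the Calder\'on product) applied to $(L^\infty,G)$ does \emph{not} produce $X_p$: a direct computation shows $(L^\infty)^{1-\theta}G^{\theta}$ is (essentially, up to separable part) the exponential Orlicz space ${\Exp}L^{p'}=(L^{\infty},G)_{\theta,\infty}$, in which the Rademacher system spans $\ell^{p,\infty}$, whereas $X_p$ is the strictly smaller real-method space $(L^{\infty},G)_{\theta,p}$. The correspondence \eqref{Asidethentity} transfers K-method parameters only, and Astashkin's uniqueness says that \emph{if} an interpolation r.i.\ space has Rademacher span $\ell^p$ then it equals $X_p$; it does not say that $[L^\infty,G]_\theta$ has Rademacher span $\ell^p$ (it does not). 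Second, the load-bearing step --- that $(1,1)$- and $(2,1)$-summability at the endpoints yields, after interpolation, a weak Lorentz-type summing estimate at exponent $p$ and hence $(q,1)$-summability for $q>p$ --- invokes no actual theorem: summing ideals are notoriously non-interpolating on couples, and your proposed substitute (splitting each $f_k$ over dyadic level sets and applying the endpoint inequalities piecewise) fails because the weak-$\ell^1$ norms $\sup_{x^*}\sum_k|\langle x^*,f_k^{(j)}\rangle|$ of the pieces, tested against $(L^\infty)^*$ resp.\ $G^*$, are not controlled by the single weak-$\ell^1$ norm of $(f_k)$ in $X_p$; the ``series converges iff $q>p$'' claim is asserted, not derived. (You also rely on $(2,1)$-summability of $G\subset L^2$, which in the paper occurs only in the motivational diagram and is neither proved nor used in the proof of Theorem \ref{main}.)

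The paper's actual proof runs along entirely different lines and, notably, does not use Bennett--Carl at all --- your fallback route through Rademacher coefficients also stalls, since summability of the restriction to the Rademacher subspace does not transfer to arbitrary finite families in $X_p$. Given $(f_i)\subset X_p$ and $p<q<2$, the paper uses $q$-concavity of $L^p$ and the fact that $(r_n)$ spans $\ell^q$ in $X_q$ to dominate $\bigl(\sum_i\|f_i\|_{L^p}^q\bigr)^{1/q}$ by the mixed norm $\bigl\|\,\|\sum_i r_i(s)f_i(t)\|_{X_q(s)}\bigr\|_{L^p(t)}$, then sandwiches $X_p\subset{\Exp}L^{\alpha}\subset X_q$ with $\alpha=p/(p-1)$, and applies Lemma \ref{lemtransp2}: the transposition $\mathcal{T}f(s,t)=f(t,s)$ is bounded from $L^{\infty}({\Exp}L^{\alpha})$ to $L^{p}({\Exp}L^{\alpha})$, which is proved by Calder\'on--Lozanovski{\v\i} interpolation because that construction --- unlike the real method --- commutes with mixed norms (Bukhvalov's theorem). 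After transposing, the sup over the Rademacher variable yields exactly $\max_{\epsilon_i=\pm1}\bigl\|\sum_i\epsilon_i f_i\bigr\|_{X_p}$. Note where the restriction $q>p$ really lives: not in summing a dyadic series, but in the facts that the real method does not commute with mixed norms and that $\mathcal{T}$ is genuinely unbounded from $L^{\infty}(X_p)$ to $L^{1}(X_p)$ (Example \ref{trbr}), forcing the detour through the strictly larger Orlicz space and hence through $X_q$. Your diagnosis that $q=p$ is the true obstruction agrees with Conjecture \ref{c1}, but the mechanism you propose for $q>p$ does not constitute a proof.
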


It is also worth to point out that in the proof we do not use Bennett--Carl theorem. Instead, the proof relies on continuity of the transposition operator $\mathcal{T}:f(s,t)\to f(t,s)$ between respective mixed norm spaces. It appears that $\mathcal{T}$ is bounded from $L^{\infty}(X)$ to $L^1(X)$, when $X$ is an Orlicz space, but surprisingly not for $X=X_p$ (see Example \ref{trbr}). 

Exploring the same ideas, we prove also ``Orlicz spaces'' version of the above.
\begin{Theorem}\label{mainOrlicz}
Let $1<p<2$.  For each $p<q<2$ the inclusion $ExpL^{q'}\subset L^p $ is $(l^{q,\infty},1)$-absolutely summing, where $\frac{1}{q}+\frac{1}{q'}=1$. 
\end{Theorem}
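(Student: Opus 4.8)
The plan is to establish the weak-type inequality defining $(l^{q,\infty},1)$-summability directly, using the boundedness of the transposition operator for Orlicz spaces as the engine and the description of the Rademacher span in $\Exp L^{q'}$ as the bridge to the sequence picture. Writing $w_1((x_n))=\sup_{\varepsilon_n=\pm 1}\|\sum_n\varepsilon_n x_n\|_{\Exp L^{q'}}$ for the weak-$l^1$ norm, the assertion to prove is that there is a constant $C$ with
\[
\sup_{\lambda>0}\lambda\,\#\{n:\|x_n\|_{L^p}>\lambda\}^{1/q}\le C\,w_1((x_n))
\]
for every finite family $(x_n)\subset\Exp L^{q'}$. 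First I would record that $w_1((x_n))$ is exactly the norm of the $\Exp L^{q'}$-valued function $\Phi(s,t)=\sum_n r_n(s)x_n(t)$ in the mixed space $L^{\infty}(\Exp L^{q'})$, and that $w_1$ does not increase when passing to a subfamily. Hence, after normalising $w_1((x_n))\le 1$ and discarding the indices with $\|x_n\|_{L^p}\le\lambda$, the whole theorem reduces to the counting statement: if $\|\Phi\|_{L^{\infty}(\Exp L^{q'})}\le 1$ and $\|x_n\|_{L^p}>\lambda$ for $n=1,\dots,m$, then $m\lesssim\lambda^{-q}$.

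The analytic heart is the boundedness of $\mathcal{T}\colon L^{\infty}(\Exp L^{q'})\to L^1(\Exp L^{q'})$, which holds because $\Exp L^{q'}$ is an Orlicz space (and which, by Example \ref{trbr}, \emph{fails} for $X_p$, the reason this route is unavailable for Theorem \ref{main}). Applying $\mathcal{T}$ to $\Phi$ and invoking that the Rademacher system spans $l^q$ in $\Exp L^{q'}$, so that $\|\sum_n r_n(t)c_n\|_{\Exp L^{q'}_t}\asymp\|(c_n)\|_{l^q}$ for scalars, I obtain the integrated estimate
\[
\int_0^1\big\|(x_n(s))_n\big\|_{l^q}\,ds=\|\mathcal{T}\Phi\|_{L^1(\Exp L^{q'})}\lesssim\|\Phi\|_{L^{\infty}(\Exp L^{q'})}\le 1 .
\]
In parallel, composing $w_1\le 1$ with the inclusion $\Exp L^{q'}\subset L^p$ and averaging over signs (Khintchine's inequality in $L^p$) gives the square-function bound $\|(\sum_n x_n^2)^{1/2}\|_{L^p}\lesssim 1$. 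These two inequalities carry, respectively, the horizontal ($l^q$, Orlicz-driven) and the vertical ($L^p$) information about the thresholded family.

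The decisive step is to turn these averaged estimates into the uniform count $m\lesssim\lambda^{-q}$, and this is where the weak space $l^{q,\infty}$ is forced. Neither ingredient suffices on its own: the $l^q$-integral is compatible with much larger $m$ when the supports of the $x_n$ overlap, and Minkowski's inequality $\|(\sum_n x_n^2)^{1/2}\|_{L^p}\ge(\sum_n\|x_n\|_{L^p}^2)^{1/2}$ extracts from the square function only the weaker $(2,1)$-estimate. The plan is therefore to feed the \emph{full} sign-invariance encoded in $\|\Phi\|_{L^{\infty}(\Exp L^{q'})}\le 1$, namely that \emph{every} sign pattern keeps $\sum_n\varepsilon_n x_n$ in the unit ball of $\Exp L^{q'}$, into a distributional argument: one estimates the measure of the sets $\{\,s:\sum_n|x_n(s)|^q>\mu\,\}$ and balances this against the lower bounds $\|x_n\|_{L^p}>\lambda$, choosing the level $\mu$ so as to interpolate the horizontal and vertical regimes. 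Summing the resulting dyadic contributions should yield $\lambda\,m^{1/q}\lesssim 1$, which is precisely the weak-type conclusion.

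The main obstacle is exactly this last conversion. Because $\Exp L^{q'}$ is only a generic Orlicz representative of the scale between $L^{\infty}$ and $G$, with fundamental function and Boyd indices not tuned as finely as those of $X_p$, the balancing of the two regimes is sharp only at the level of level sets, and the geometric summation costs precisely the passage from $l^q$ to $l^{q,\infty}$; I do not expect the strong $l^q$ target to survive this scheme, consistent with the fact that only the optimally chosen $X_p$ is conjectured to yield the full $(q,1)$-estimate of Theorem \ref{main}. The remaining routine points, namely the identification of the Rademacher span in $\Exp L^{q'}$, the mixed-norm identity for $w_1$, and the boundedness of $\mathcal{T}$ on the Orlicz mixed space, I would treat as the technical scaffolding supporting this distributional core.
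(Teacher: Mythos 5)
Your skeleton --- mixed norms, the transposition operator $\mathcal{T}$, the Rademacher span of $\Exp L^{q'}$ --- is indeed the paper's, but both load-bearing steps are missing, and the distributional scheme you propose in their place provably cannot work. First, you misquote the Rademacher span: by \cite[Example 3.2]{As20}, the Rademacher system spans $l^{q,\infty}$ in $\Exp L^{q'}$, i.e. $\big\|\sum_n r_n c_n\big\|_{\Exp L^{q'}}\asymp\|(c_n)\|_{l^{q,\infty}}$, \emph{not} $l^q$; the lower bound $\|(c_n)\|_{l^q}\lesssim\big\|\sum_n r_n c_n\big\|_{\Exp L^{q'}}$ that your ``integrated estimate'' relies on is false (take $c_n=n^{-1/q}$, $n\le m$: the right-hand side stays bounded while the left grows like $(\log m)^{1/q}$). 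So the weak space $l^{q,\infty}$ in the conclusion enters directly through the Rademacher equivalence, not, as your narrative has it, through losses in a dyadic summation. Second, and decisively, the step you yourself flag as ``the main obstacle'' is a dead end from the two ingredients you actually derive. From $\mathcal{T}\colon L^{\infty}(\Exp L^{q'})\to L^{1}(\Exp L^{q'})$ you retain only $\int_0^1\|(x_n(s))_n\|_{l^{q,\infty}}\,ds\lesssim 1$, and from Khintchine in $L^p$ only $\big\|\big(\sum_n x_n^2\big)^{1/2}\big\|_{L^p}\lesssim 1$. Test $x_n=a\chi_A$ for $n=1,\dots,m$ with $|A|=\delta$ and $a\delta^{1/p}=\lambda$: the first estimate reads $m^{1/q}\lambda\,\delta^{1/p'}\lesssim 1$, hence only $m\lesssim\lambda^{-q}\delta^{-q/p'}$, vacuous as $\delta\to 0$; the second gives $m\lesssim\lambda^{-2}$, strictly weaker than the needed $m\lesssim\lambda^{-q}$ since $q<2$. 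The hypothesis $\sup_{\varepsilon}\|\sum_n\varepsilon_nx_n\|_{\Exp L^{q'}}\le 1$ does exclude this family (it forces $m\,a\,\ln^{-1/q'}(e/\delta)\lesssim 1$, hence $m$ small), but that is exactly the information your two averaged inequalities discard --- no balancing of level sets of $\sum_n|x_n|^q$ against the thresholds can recover it.

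What closes the proof in the paper are two statements absent from your plan. (i) Lemma \ref{lemtransp2} gives the transposition bound into $L^{p}(\Exp L^{q'})$, not merely into $L^{1}$: keeping $L^p$-integrability in the \emph{outer} variable is precisely what the example above shows is indispensable. (ii) Theorem \ref{concave}, the $l^{q,\infty}$-\emph{concavity} of $L^p$,
\[
\big\|\big(\|f_k\|_{L^p}\big)_k\big\|_{l^{q,\infty}}\le C\,\Big\|\big\|(f_k(t))_k\big\|_{l^{q,\infty}}\Big\|_{L^p(t)},
\]
which the paper proves by real interpolation of mixed-norm spaces (Cwikel, Triebel) between the $r$- and $s$-concavity of $L^p$ with $p<s<q<r$. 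Given these, the argument is three lines and requires no thresholding or counting at all: apply the weak concavity, then the $l^{q,\infty}$ Rademacher equivalence pointwise in $t$ to reach $\big\|\|\sum_k r_k(s)f_k(t)\|_{\Exp L^{q'}(s)}\big\|_{L^p(t)}$, then Lemma \ref{lemtransp2} to transpose and land in $\big\|\|\sum_k r_k(t)f_k(s)\|_{\Exp L^{q'}(s)}\big\|_{L^{\infty}(t)}=\max_{\epsilon_k=\pm 1}\big\|\sum_k\epsilon_kf_k\big\|_{\Exp L^{q'}}$. To repair your write-up: replace $l^q$ by $l^{q,\infty}$ throughout, upgrade the transposition to its $L^p$-valued form, and prove (or cite) the weak-concavity inequality in place of the distributional core.
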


Proofs of both results together with a wide discussion of the method and its perspectives constitute the content of Section \ref{transpositionsection}. 

In the next Section \ref{AS} an analogous question is considered, but with a simplification that the destination space is the biggest possible among r.i. spaces, i.e. $L^1$. Then we present a completely different approach to the subject in which the central role is played by the following theorem. 
\begin{Theorem}\label{Th1}
There exists a universal constant $\gamma>0$ such that for every $\tau>0$, any $n\in \mathbb{N}$
and arbitrary sequence $(g_k)_{k=1}^n\subset L_1$  we can find $\epsilon_k=\pm 1$, $k=1,2,\dots,n$, with
\begin{equation}\label{equ4}
 \gamma \int_0^\tau\Big(\sum_{k=1}^n \|g_k\|_1r_k\Big)^*(t)\,dt\leq \int_0^\tau\Big(\sum_{k=1}^n \epsilon_k g_k\Big)^*(t)\,dt.
\end{equation}
\end{Theorem}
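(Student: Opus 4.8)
The plan is to fix the sequence $(g_k)_{k=1}^n$ and to read the right-hand side of \eqref{equ4} through the classical identity $\int_0^\tau f^*(t)\,dt=\sup\{\int_E|f|\,dm:\,m(E)\le\tau\}$, which identifies $\int_0^\tau f^*$ with the $K$-functional $K(\tau,f;L^1,L^\infty)$. The task then becomes: produce \emph{one} sign vector $(\epsilon_k)$ so that the $K$-functional of $\sum_k\epsilon_k g_k$ dominates, uniformly in $\tau$, a fixed multiple of that of the Rademacher sum $\sum_k\|g_k\|_1 r_k$; equivalently, I want $\sum_k\epsilon_k g_k$ to submajorize $\gamma\sum_k\|g_k\|_1 r_k$, and it suffices to verify this at the dyadic values $\tau\approx 2^{-m}$.

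Before choosing signs it is worth separating the two scales at which the right-hand side lives. Writing $a_k=\|g_k\|_1$, for $\tau\gtrsim 1$ the quantity $\int_0^\tau(\sum_k a_k r_k)^*$ is, by Khintchine's inequality, comparable to $\|\sum_k a_k r_k\|_1\asymp(\sum_k a_k^2)^{1/2}$. At this scale a genuinely random choice already works: for any admissible $E$ one has $\mathbb{E}_\epsilon\int_0^\tau(\sum_k\epsilon_k g_k)^*\ge\mathbb{E}_\epsilon\int_E|\sum_k\epsilon_k g_k|\asymp\int_E(\sum_k g_k^2)^{1/2}$ by Khintchine applied pointwise, and Minkowski's integral inequality gives $\int(\sum_k g_k^2)^{1/2}\ge(\sum_k\|g_k\|_1^2)^{1/2}$, matching the target. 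So the large-scale part of the statement is soft, and by an averaging argument some sign vector realizes it.

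The difficulty is concentrated at small $\tau$. When $\tau\approx 2^{-m}$ the rearrangement of the Rademacher sum is governed by the partial sum $\sum_{k\le m}a_k^*$ of the largest coefficients (the large-deviation regime of $\sum_k a_k r_k$), so that $\int_0^\tau(\sum_k a_k r_k)^*\asymp\tau\sum_{k\le m}a_k^*$. To reproduce this on the left I must choose signs so that the functions of largest $L^1$-norm \emph{reinforce} rather than cancel on a set of measure $\approx\tau$. This is precisely where the averaging heuristic breaks down: for $g_k\equiv\mathbf 1$ the only useful sign vectors are the near-constant ones, whereas a uniformly random vector produces cancellation and an expectation too small by a factor of order $\sqrt n$. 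Hence the signs cannot be taken at random; they must be extracted adaptively from the functions themselves. The key step, and the main obstacle, is therefore a deterministic sign-selection lemma: given $g_1,\dots,g_m$, one can find signs and a set $E$ with $m(E)\lesssim 2^{-m}$ on which $|\sum_{k\le m}\epsilon_k g_k|$ is at least a fixed multiple of $\frac{1}{m(E)}\sum_{k\le m}\|g_k\|_1$.

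My plan for this lemma is a greedy, scale-by-scale construction: process the functions in decreasing order of $\|g_k\|_1$, at each stage fixing $\epsilon_k$ so as to maximize the correlation of $\epsilon_k g_k$ with the partial sum already accumulated on the current high-level set, and thinning that set by a factor close to $1/2$ at each stage, so that after $m$ steps its measure is $\approx 2^{-m}$ while the retained mass is still a constant fraction of $\sum_{k\le m}\|g_k\|_1$. The delicate point, which I expect to be the crux, is to run this selection \emph{simultaneously across all dyadic scales with a single sign vector}, reconciling the random-like choice that is optimal for $\tau\gtrsim1$ with the aligned choice forced at small $\tau$. I plan to organize the coefficients into dyadic blocks by size, perform the greedy alignment inside each block, and then check that the resulting $K$-functional lower bounds, summed over blocks, recover $\gamma\int_0^\tau(\sum_k\|g_k\|_1 r_k)^*$ for every $\tau$, with a constant $\gamma$ independent of $n$, of $\tau$, and of the particular functions $g_k$.
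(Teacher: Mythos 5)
You have silently strengthened a quantifier, and the strengthening is fatal to the plan. In Theorem \ref{Th1} the signs are allowed to depend on $\tau$: the statement reads ``for every $\tau>0$ \dots we can find $\epsilon_k$'', the paper stresses this right after the statement (``Since the choice of signs depends on $\tau$\dots''), and in Section \ref{AS} it is explicitly asserted that the genuine submajorization $\gamma\sum_{k}\|g_k\|_1 r_k\prec\sum_k\epsilon_k g_k$ with a \emph{single} sign vector ``cannot hold in general''. Your proposal takes exactly that submajorization as its target (``produce \emph{one} sign vector \dots uniformly in $\tau$''), and the step you yourself flag as the crux --- reconciling, with one vector, the random-like choice that is right at $\tau\asymp 1$ with the aligned choice forced at $\tau\approx 2^{-m}$, simultaneously for all $m$ --- is precisely where you offer no argument. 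Read with the correct quantifiers, this whole simultaneity problem evaporates: after the reduction to dyadic $\tau=2^{-i}$ (your reduction via the identity $\int_0^\tau f^*=K(\tau,f;L^1,L^\infty)$ and concavity is fine, and it is also the paper's first step), one may choose the signs for that single $i$, which is what the paper does.

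There is a second, independent gap at a fixed small scale. For $\tau=2^{-i}$ the quantity to be reproduced is not $2^{-i}\sum_{k\le i}a_k^*$ but, by \eqref{equ5} (from \cite{As99}), $2^{-i}\bigl(\sum_{k\le i}a_k+\sqrt{i}\,\bigl(\sum_{k>i}a_k^2\bigr)^{1/2}\bigr)$, and the tail term can dominate: with all $a_k$ equal and $i\ll n$ the head contributes $i$ while the tail contributes about $\sqrt{in}$. Aligning the $i$ largest functions, which is all your greedy lemma aims at, cannot manufacture the factor $\sqrt{i}\,\sigma$. The paper's mechanism for this is the block decomposition: split $\{1,\dots,n\}$ into $i$ sets $A_l$, each containing one of the indices $1,\dots,i$ together with an almost equal share $\approx\sigma^2/i$ of the tail $\ell^2$-mass \eqref{equ5a}; inside each block, Fubini plus Khintchine plus Minkowski produce signs $\delta_k$ with $\|y_l\|_1\ge\frac{1}{\sqrt 2}\bigl(\sum_{k\in A_l}a_k^2\bigr)^{1/2}$ for $y_l=\sum_{k\in A_l}\delta_k a_k f_k$; and only then does a sign-selection step act \emph{across} the $i$ block sums: partition $[0,1]$ into the $2^i$ sets $E_\eta$ determined by the sign patterns of $(y_1,\dots,y_i)$ and apply the pigeonhole Lemma \ref{LL2} (in the form of Lemma \ref{L1}) to find one pattern $\eta'$ giving $\int_0^{2^{-i}}\bigl(\sum_l\eta'_ly_l\bigr)^*(t)\,dt\ge d\,2^{-i}\sum_l\|y_l\|_1$; the head and tail cases are then finished separately via \eqref{equ6} and \eqref{equ10}. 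Your greedy halving construction is a plausible substitute for that last selection lemma, and only for it, but you leave it unproved, and without the block decomposition it handles only the head-dominated regime. In summary: your dyadic reduction and your soft treatment of $\tau\asymp 1$ match the paper, but the proposal attacks a strictly stronger statement that the paper says is false in general, and it is missing the decomposition that captures the tail term at small scales.
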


This result seems to be interesting in its own. Since the choice of signs depends on $\tau$, one may regard inequality \eqref{equ4} as a weak version of submajorization. Nevertheless, we are able to use it to deduce a number of results around summability properties of the inclusion $X\subset L^1$. 

Finally, in the last section we apply the main result to conclude absolute summability properties of embeddings of Sobolev spaces into Lebesgue spaces. Precisely, we determine  summability type of  the inclusions
\[
W^{\frac{n}{m},m}(I^n)\subset L^{\frac{n}{m}}(I^n) {\rm\ and\ } W^{\frac{n}{m},m}(I^n)\subset L^{1}(I^n).
\]
Up to our knowledge, the result is new and contributes to the Pełczyński question about summability of Sobolev embeddings (cf. \cite{Woj97}).


\section{Notation and definitions}

\subsection{Banach function spaces}

By $L^0 = L^0(I)$ we denote the space of all equivalence 
classes of real-valued Lebesgue measurable functions defined on $I = [0, 1]$. A {\it Banach function space} 
$X = (X, \|\cdot\|_X)$ on $I$ is  a Banach space contained in $L^0(I)$, which satisfies the so-called ideal 
property, i.e.: if $f, g \in L^0, |f| \leq |g|$ almost everywhere on $I$ and $g \in X$, then also $ f\in X$ and $\|f\|_X \leq \|g\|_X$. 
Analogously we define Banach sequence spaces, i.e. replacing $I$ by $\mathbb N$ with the counting
measure.

For a Banach function space $X$  its {\it K{\"o}the dual space} $X^{\prime}$ is the space 
of those $f \in L^0$ for which the {\it dual norm}
\begin{equation} \label{dual}
\|f\|_{X\prime} = \sup_{g \in X, \, \|g\|_{X} \leq 1} \int_{I} |f(t) g(t) | \, dt
\end{equation}
is finite. The K{\"o}the dual $X^{\prime}$ of  a Banach function space is a Banach function space as well. Moreover, always
$X \subset X^{\prime \prime}$, but $X = X^{\prime \prime}$ with $\|f\|_X = \|f\|_{X^{\prime \prime}}$ 
if and only if  $X$ has the {\it Fatou property}. Equivalently, $X$ has the Fatou property  if the conditions $0 \leq f_{n} \nearrow f$ a.e. on $I$
and $\sup_{n \in {\mathbb{N}}} \|f_{n}\|_X < \infty$ imply that $f \in X$ and $\|f_{n}\|_X \nearrow \|f\|_X$.

Recall  finally that a Banach function space $X$ is called $p$-concave when there is $C>0$ such that for each $n\in \mathbb{N}$ and every sequence $(f_k)_{k=1}^n\subset X$ there holds 
\[
\Big(\sum_{k=1}^n\|f_k\|_X^p\Big)^{1/p}\leq C \Big\|\Big(\sum_{k=1}^n|f_k|^p\Big)^{1/p}\Big\|_X,
\]
(cf. \cite[Definition 1.d.3]{LT79}).


\subsection{Rearrangement invariant spaces}
A Banach function space $X = (X,\| \cdot \|_{X})$ with the Fatou property is said to be a {\it rearrangement invariant} space (r.i. space for short)
if  for any $f\in X$ and $g \in L^{0}$ the equality  $d_{f}(\lambda)\equiv d_{g}(\lambda)$ imply $g\in X$ and $\| f\|_{X} = \| g\|_{X}$,  where
$$
d_{f}(\lambda) := m(\{t \in I: |f(t)|>\lambda \}) {\rm \ for\  }\lambda \geq 0.
$$ 
Moreover,  $\| f\|_{X}=\| f^{\ast }\|_{X}$, where 
$f^{\ast }(t) = \mathrm{\inf } \{\lambda >0\colon \ d_{f}(\lambda ) < t\},\ t\geq 0$.

For a r.i. function space $X$ its fundamental function $\varphi_X$ is defined as follows
\begin{equation*}
\varphi_X(t)=\|\chi_{[0,t]}\|_X, ~t\in I,
\end{equation*}
where  $\chi_A$ is the characteristic function of a set $A$.

A kind of Lorentz spaces will play the central role in the paper, thus
let us recall the respective definition. Given $1\leq p<\infty$ and a concave, nondecreasing function $\varphi:I\to [0,\infty)$ with $\varphi (0) = 0$,  
the {\it Lorentz space} $\Lambda_{p,\varphi}$ is given by the norm
\begin{equation*}
\|f\|_{\Lambda_{p,\varphi}} = \Big(\int_I (f^*(t))^p \,d\varphi(t)\Big)^{1/p}.
\end{equation*}
The case $p=\infty$ corresponds to the {\it Marcinkiewicz spaces} $M_{\varphi}$ given by the norm
\begin{equation} \label{Marcinkiewicz}
\|f\|_{M_{\varphi}}=\sup_{t\in I} \frac{\varphi(t)}{t} \int_0^t f^*(s) \,ds.
\end{equation}


We will be mainly interested in a special class of Lorentz spaces, which for brevity we will denote by $X_p$. Namely, for $1<p<2$ we put
\[
X_p= \Lambda_{p,W},
\]
where
\[
W(t)=\ln^{1-p}(e/t).
\]

Let us also recall the definition of another class of r.i. function spaces, i.e. Orlicz spaces.
Let $\Phi$ be an increasing convex function on $[0, \infty)$ such that $\Phi (0) = 0$. The {\it Orlicz space} $L_{\Phi}$ is defined by the Luxemburg--Nakano norm
$$
\| f \|_{L_{\Phi}} = \inf \Big\{\lambda > 0: \int_I \Phi(|f(t)|/\lambda) \, dt \leq 1 \Big\},
$$
(see e.g.  \cite{Mal89}).

For general properties of r.i. spaces we refer to the books \cite{BS88}, \cite{KPS82} and
\cite{LT79}. Once again, we will use a special notation for a special subclass of Orlicz spaces. Namely, 
\[
{\Exp}L^p:=L_{N_p} {\rm \ for\ }N_p(u)=e^{u^p}-1, u\ge 0, 1\leq p<\infty.
\]
Moreover, the Rodin--Semenov space, traditionally denoted  by $G$,
is the subspace of order continuous elements of ${\Exp}L^2$ (cf. \cite{RS75,As20,LT79}).

\subsection{Mixed norm spaces}

In the sequel we will work with mixed norm spaces. Having two Banach function spaces $X,Y$ on $I=[0,1]$, the mixed norm space $X(Y)$ is defined to be the space of all functions measurable on $I^2=I\times I$ such that
\[
\|f\|_{X(Y)}=\|\gamma\|_{X},
\]
where
\[
\gamma(t)=\|f(\cdot,t)\|_{Y}.
\]
We will use the notation $\|\|f(s,t)\|_{Y(s)}\|_{X(t)}:=\|\gamma\|_{X}$ in order to emphasize which variable corresponds to respective norm.  More information on mixed norm spaces may be found in \cite{Bu87,BBS02} and references therein.

\subsection{Calderón--Lozanovski{\v \i} spaces}

Let us recall the Calder\'on-Lozanovski{\v \i} construction for Banach function spaces. By ${\mathcal U}$ we denote the class of  functions 
$\rho: {\mathbb R_+} \times {\mathbb R_+} \rightarrow {\mathbb R_+}$,where ${\mathbb R_+}:=[0,\infty)$, that are positively homogeneous  and concave (note that any function $\rho \in {\mathcal U}$ is continuous on $(0, \infty) \times (0, \infty)$).

Given $\rho \in {\mathcal U}$ and a couple of Banach function lattices $(X_0, X_1)$ on $I$, the 
{\it Calder\'{o}n-Lozanovski{\u \i} space} $\rho (X_0, X_1)$ is defined as the set of all $f \in L^0$ such that for some $f_0 \in X_0, f_1 \in X_1$ 
with $\| f_0 \|_{X_0} \leq 1, \| f_1\|_{X_1} \leq 1$ and for some $\lambda > 0$ we have
\begin{equation*}
| f(x)| \leq \lambda\, \rho(|f_0(x)|, | f_1(x)|) ~~ {\rm a.e. ~on} ~ I. 
\end{equation*}
The norm $\| f \|_{\rho (X_0, X_1)}$ on $\rho (X_0, X_1)$ is then defined as the 
infimum of those $\lambda$ for which the above inequality holds.

The Calder\'on-Lozanovski{\v \i} construction is an interpolation method for positive operators and under additional assumptions on $X_0,X_1$ (for example the Fatou property) also an interpolation method for all linear operators (see \cite{Mal89}).
More information, especially on interpolation properties, can be found in \cite{BK91, KPS82, Mal89, Ni85}.
\vspace{3mm}

\subsection{K-method of interpolation}
Let $ (X_0, X_1)$ be a  couple of Banach function spaces. Then the {\it Peetre K-functional} of  $f \in X_0+X_1$ is
$$
K(t, f; X_0, X_1) = \inf \{ \| f_0\|_{X_0} + t \| f_1\|_{X_1}: f = f_0 + f_1, f_0 \in X_0, f_1 \in X_1 \} {\rm \ for\ }t > 0. 
$$
If  $E$ is a Banach function space on $(0, \infty)$ containing the function $\min(1, t)$ then  the space $(X_0, X_1)_E^K$ of the real $K$-method of interpolation is defined by
$$
(X_0, X_1)_E^K = \{f \in X_0 + X_1: K(t, f; X_0, X_1) \in E \}
$$
with the norm $$\| f \|_{(X_0, X_1)_E^K} = \| K(\cdot, f; X_0, X_1) \|_E.$$ 

For the special case of $E$, i.e.
$$
\| f\|_E = \left(\int_0^{\infty} (t^{-\theta} |f(t)|)^p \frac{dt}{t}\right)^{1/p}$$ for $0 < \theta < 1$ and $ 1 \leq p < \infty$ or 
$$
\| f\|_E = \sup_{t > 0} t^{-\theta} |f(t)|$$ 
for $0 \leq \theta \leq 1$ (when $p= \infty$),
we obtain the classical spaces of the real K-method $(X_0, X_1)_{\theta, p}$.
More on the K-method and  interpolation spaces  may be found in the books \cite{BS88,  BK91} and \cite{KPS82}.

\subsection{Rademacher sequence in r.i. spaces}
We will be mainly interested in r.i. spaces that are in a certain sense close to $L^{\infty}$, i.e. are smaller than $G$. 
It is due to the fact that in r.i. spaces between  $L^{\infty}$ and  $G$, the Rademacher system spans copies of r.i. sequence spaces smaller than $l^2$. 
More precisely, by Astashkin results \cite[Theorem 3.2 and 3.4]{As20} (see also \cite{As10}) for each interpolation space  $E$ between $l^1$ and $l^2$, there is (unique) interpolation r.i. function space $X_E$ between $L^{\infty}$ and  $G$, such that the Rademacher system spans a copy of $E$ in $X_E$. Since the couple $(l^1,l^2)$ is K-monotone, every interpolation space for this couple can be represented as a space of the real K-method of interpolation. Moreover, denoting by $S$ the corresponding  parameter, we have the following: 
\begin{equation}\label{Asidethentity}
{\rm \ \ if\ \ }E=(l^1,l^2)_S^K, {\rm \ \ then\ \ }X_E=(L^{\infty},G)_S^K.
\end{equation}

From the above, it follows that
\begin{itemize}
\item for $E=l^p$, $1<p<2$,
\[
X_{l^p}=X_p=(L^{\infty},G)_{\theta,p} {\rm \ since\ }(l^1,l^2)_{\theta,p}=l_p,
\]
where $\theta =\frac{2(p - 1)}{p}$ (see \cite{RS75} or \cite[Example 3.1]{As20}),
\item  for $E=l^{p,\infty}$, $1<p<2$,
\[
X_{l^{p,\infty}}={\Exp}L^{p'}=(L^{\infty},G)_{\theta,\infty}  {\rm \ since\ }(l^1,l^2)_{\theta,\infty}=l_{p,\infty},
\]
where $p'$ is conjugate of $p$ and $\theta =\frac{2(p - 1)}{p}$ (see \cite{RS75}, 
\cite{MP-84} or \cite[Example 3.2]{As20}).
\end{itemize}


\subsection{Absolutely summing operators}

A linear operator $T:X\to Y$ between Banach spaces is called  $(p,q)$-absolutely
summing (or simply:  $(p,q)$-summing) if  for some constant $C>0$ and each finite collection of vectors  $x_1,x_2,...,x_n\in X$,
\[
(\sum_{k=1}^{n}\|Tx_i\|^p_Y)^{\frac{1}{p}}\leq C\sup_{x^*\in B_{X^*}}\left(\sum_{i=n}^n|\langle x^*,x_i\rangle|^q\right)^{1/q}.
\]
$(p,p)$-summing operators are shortly called $p$-summing.


In \cite{DMM01, DMM02, DMM02b} the following natural generalization of $(p,q)$-summability was proposed.

\begin{Definition}
Let $E$ be a Banach sequence space such that $l^p\subset E$ for some $1\leq p<\infty$. An operator $T:X\to Y$ between Banach spaces $X,Y$ is called {\it $(E,p)$-absolutely summing} (or  $(E,p)$-summing, for short) if there is a constant $C>0$ such that for each finite collection of vectors $x_1,x_2,...,x_n\in X$,
\[
\|(\|Tx_i\|_Y)_{i=1}^{n}\|_E\leq C\sup_{x^*\in B_{X^*}}\Big(\sum_{i=n}^n|\langle x^*,x_i\rangle|^p\Big)^{1/p}.
\]
\end{Definition}

More information on absolute summability and its applications the reader can find in books \cite{DJT95}, \cite{TJ89} and  \cite{AK06}.


\section{Inclusion of $X_p$ into $L^p$}\label{transpositionsection}

The proof of Theorem \ref{main} is based on the following lemma about boundedness of transposition operator $\mathcal{T}$ on mixed norm spaces.

\begin{Lemma}\label{lemtransp2}
Let $1<p<2$ and consider the transposition operator 
$$
\mathcal{T}:f(s,t)\mapsto f(t,s),
$$
defined on measurable functions on $I^2$. Then $\mathcal{T}:L^{\infty}({\Exp}L^{\alpha})\to   L^{p}({\Exp}L^{\alpha})$ is bounded for each $\alpha>0$. 
\end{Lemma}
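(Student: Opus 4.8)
The plan is to unwind both mixed norms explicitly and reduce the asserted boundedness to a single pointwise inequality between the $\Exp L^{\alpha}$-norm of a one-dimensional slice and its exponential modular, the two sides being linked only through a global integrability bound. Write $u=|f|$ and recall the convention $\|g\|_{X(Y)}=\big\|\,t\mapsto\|g(\cdot,t)\|_{Y}\,\big\|_{X}$. Then the hypothesis $\|f\|_{L^{\infty}(\Exp L^{\alpha})}\le 1$ says precisely that the first-variable slices are uniformly controlled: $\|u(\cdot,t)\|_{\Exp L^{\alpha}}\le 1$ for a.e.\ $t$. Since $\|\psi\|_{\Exp L^{\alpha}}\le 1$ is equivalent to $\int_0^1\big(e^{|\psi|^{\alpha}}-1\big)\le 1$, this gives $\int_0^1 e^{u(s,t)^{\alpha}}\,ds\le 2$ for a.e.\ $t$ and hence, integrating in $t$, the global budget $\iint_{I^2}e^{u^{\alpha}}\le 2$. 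On the other hand, since $(\mathcal T f)(s,t)=f(t,s)$, unwinding the target norm gives
\[
\|\mathcal T f\|_{L^{p}(\Exp L^{\alpha})}^{p}=\int_0^1 b(t)^{p}\,dt,\qquad b(t):=\|u(t,\cdot)\|_{\Exp L^{\alpha}},
\]
where $b(t)$ is now the $\Exp L^{\alpha}$-norm of the \emph{second}-variable slice $s\mapsto u(t,s)$. Thus the whole statement becomes the estimate $\int_0^1 b(t)^{p}\,dt\le C$: a control of the transposed family of slices by the global modular budget.

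The key step I would isolate is the pointwise comparison: if $\psi$ on $I$ satisfies $\|\psi\|_{\Exp L^{\alpha}}>\Lambda$ with $\Lambda\ge 1$, then
\[
\int_0^1 e^{|\psi|^{\alpha}}\,ds\ \ge\ 2^{\Lambda^{\alpha}}.
\]
I would prove this by Jensen's inequality for the concave function $x\mapsto x^{1/\Lambda^{\alpha}}$ (note $1/\Lambda^{\alpha}\le 1$). Indeed $\|\psi\|_{\Exp L^{\alpha}}>\Lambda$ forces $\int_0^1 e^{(|\psi|/\Lambda)^{\alpha}}\,ds>2$, while Jensen on the probability space $(I,ds)$ yields $\int_0^1\big(e^{|\psi|^{\alpha}}\big)^{1/\Lambda^{\alpha}}\,ds\le\big(\int_0^1 e^{|\psi|^{\alpha}}\,ds\big)^{1/\Lambda^{\alpha}}$; combining the two and raising to the power $\Lambda^{\alpha}$ gives the claim. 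In words: a slice with large $\Exp L^{\alpha}$-norm is expensive, its exponential modular being at least exponentially large in $\Lambda^{\alpha}$.

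Feeding this back, set $\mathrm{cost}(t):=\int_0^1 e^{u(t,s)^{\alpha}}\,ds$, so that $\int_0^1\mathrm{cost}(t)\,dt\le 2$ by the first paragraph. For any $\lambda\ge 1$ the comparison gives $\mathrm{cost}(t)\ge 2^{\lambda^{\alpha}}$ on $\{b>\lambda\}$, whence a Chebyshev-type bound
\[
m\big(\{t:b(t)>\lambda\}\big)\cdot 2^{\lambda^{\alpha}}\le\int_{\{b>\lambda\}}\mathrm{cost}(t)\,dt\le 2,
\]
that is $m(\{b>\lambda\})\le 2^{1-\lambda^{\alpha}}$. The layer-cake formula then finishes the estimate,
\[
\int_0^1 b(t)^{p}\,dt=\int_0^{\infty}p\lambda^{p-1}m(\{b>\lambda\})\,d\lambda\le\int_0^1 p\lambda^{p-1}\,d\lambda+2\int_1^{\infty}p\lambda^{p-1}2^{-\lambda^{\alpha}}\,d\lambda,
\]
which is finite since the super-exponential factor $2^{-\lambda^{\alpha}}$ dominates any polynomial; this bounds $\|\mathcal T f\|_{L^{p}(\Exp L^{\alpha})}$ by a constant depending only on $p$ and $\alpha$.

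The main obstacle is conceptual rather than computational: the hypothesis governs one family of slices while the conclusion concerns the \emph{transposed} family, and the only bridge between them is the global modular $\iint e^{u^{\alpha}}$. The Jensen comparison is exactly the device that converts ``large transposed-slice norm'' into ``large local contribution to the global modular'', after which a distributional argument closes the loop. I would also remark that the argument never uses $p<2$ and in fact works for every $0<p<\infty$; the restriction in the statement is dictated solely by the intended application to Theorem~\ref{main}.
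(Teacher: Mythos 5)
Your proof is correct, and it takes a genuinely different route from the paper's. The paper proves Lemma~\ref{lemtransp2} by interpolation: it represents $\Exp L^{\alpha}$ as the Calder\'on--Lozanovski{\v \i} space $\rho_{\alpha}(L^{p},L^{\infty})$ with $\phi(u)=(e^{u^{\alpha}}-1)^{1/p}$, observes that $\mathcal{T}$ is a contraction both from $L^{\infty}(L^{p})$ to $L^{p}(L^{p})$ and from $L^{\infty}(L^{\infty})$ to $L^{p}(L^{\infty})$, and then invokes Bukhvalov's theorem that the construction $\rho_{\alpha}$ commutes with the mixed-norm construction, so that $\rho_{\alpha}\bigl(L^{\infty}(L^{p}),L^{\infty}(L^{\infty})\bigr)=L^{\infty}(\Exp L^{\alpha})$ and likewise with outer $L^{p}$; boundedness then follows since Calder\'on--Lozanovski{\v \i} is an interpolation method for positive operators. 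You instead argue directly at the level of the exponential modular: the hypothesis gives the global budget $\iint e^{u^{\alpha}}\le 2$, your Jensen comparison correctly converts $\|u(t,\cdot)\|_{\Exp L^{\alpha}}>\lambda$ (via monotonicity of the Luxemburg functional, which legitimately yields $\int e^{(|\psi|/\lambda)^{\alpha}}>2$) into the local cost $\int_0^1 e^{u(t,s)^{\alpha}}\,ds\ge 2^{\lambda^{\alpha}}$, and Chebyshev plus layer-cake give $m(\{b>\lambda\})\le 2^{1-\lambda^{\alpha}}$ and hence $\int_0^1 b^{p}<\infty$. What the paper's route buys is context: the commutation of the inner construction with mixed norms is precisely the property whose failure for the real method is dissected in the subsequent Remark and in Example~\ref{trbr}, so the interpolation proof makes visible why the method reaches Orlicz spaces but not $X_p$. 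What your route buys is self-containedness and explicit constants depending only on $p$ and $\alpha$, with no appeal to Bukhvalov's theorem, and your closing observation is accurate: neither proof really uses $p<2$ (the paper's representation $\rho_{\alpha}(L^p,L^\infty)$ also works for all $p<\infty$), the restriction coming solely from the application in Theorem~\ref{main}. One small caveat: for $0<\alpha<1$ the function $e^{u^{\alpha}}-1$ fails to be convex near the origin, so $\Exp L^{\alpha}$ must then be defined by an equivalent convex Orlicz function; your Luxemburg-type functional remains an equivalent quasinorm and the argument survives with constants depending on $\alpha$, but a sentence acknowledging this would make the claim for ``each $\alpha>0$'' airtight (in the paper's application $\alpha=p/(p-1)>2$, so nothing is at stake there).
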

\proof
Clearly, in both cases 
$$
\mathcal{T}:L^{\infty}(L^{p})\to   L^{p}(L^{p})
$$
and 
$$
\mathcal{T}:L^{\infty}(L^{\infty})\to   L^{p}(L^{\infty})
$$
$\mathcal{T}$  is a contraction. On the other hand, it is known (see \cite{Mal89}) that  
$$
{\Exp}L^{\alpha}=\rho_{\alpha}(L^{p},L^{\infty}),
$$
where  $\rho_{\alpha}(L^{p},L^{\infty})$ is the Calderón--Lozanovski{\v \i} space defined by
\[
\rho_{\alpha}(s,t)=t\phi^{-1}\Big(\frac{s}{t}\Big),
\]
where
\[
\phi(u)=(e^{u^{\alpha}}-1)^{1/p}.
\]
By the Bukhvalov's theorem \cite[Theorem 6 b)]{Bu87} we get 
\[
\rho_{\alpha}(L^{\infty}(L^{p}),L^{\infty}(L^{\infty}))=L^{\infty}\left({\Exp}L^{\alpha}\right) {\rm \ and\ }\rho_{\alpha}(L^{p}(L^{p}),L^{p}(L^{\infty}))=L^{p}\left({\Exp}L^{\alpha}\right).
\]
In consequence, 
\[
\mathcal{T}:L^{\infty}\left({\Exp}L^{\alpha}\right)\to L^{p}\left({\Exp}L^{\alpha}\right),
\]
since the Calderón--Lozanovski{\v \i} construction is an interpolation method for positive operators (see \cite{Mal89}). 
\endproof

Having the above lemma Theorem \ref{main} follows. 

\proof[Proof of Theorem \ref{main}]
Fix $1<p<q<2$ and let $(f_i)_{i=1}^n\subset X_p$. We have by $q$-concavity of $L^p$
\[
\Big(\sum_i \|f_i\|_{L^p}^q\Big)^{1/q}\leq \Big\|\Big(\sum_i |f_i|^q\Big)^{1/q}\Big\|_{L^p}\leq M \Big\|\Big\|\sum_i r_i(s)f_i(t)\Big\|_{X_q(s)}\Big\|_{L^p(t)},
\]
where the second inequality and constant $M$ come from the fact that Rademacher system spans copy of $l^q$ in $X_q$ \cite[Example 3.1]{As20}. 

We claim that there is an Orlicz space ${\Exp}L^{\alpha}$ such that 
\[
X_p\subset {\Exp}L^{\alpha}\subset X_q.
\]
Suppose for the moment the claim is true. Then we have for some constants $d,D>0$
\[
\Big\|\Big\|\sum_i r_i(s)f_i(t)\Big\|_{X_q(s)}\Big\|_{L^p(t)}\leq D\Big\|\Big\|\sum_i r_i(s)f_i(t)\Big\|_{{\Exp}L^{\alpha}(s)}\Big\|_{L^p(t)}
\]
and using Lemma \ref{lemtransp2} 
\[
\leq 2D \Big\|\Big\|\sum_i r_i(t)f_i(s)\Big\|_{{\Exp}L^{\alpha}(s)}\Big\|_{L^{\infty}(t)}\leq 2dD \Big\|\Big\|\sum_i r_i(t)f_i(s)\Big\|_{X_p(s)}\Big\|_{L^{\infty}(t)}. 
\]
Finally, from the above inequalities it follows
\[
\Big(\sum_i \|f_i\|_{L^p}^q\Big)^{1/q}\leq 2dDM\max_{\epsilon_i=\pm 1}\Big\|\sum_i \epsilon_i f_i\Big\|_{X_p}.
\]
It remains to prove the claim. On the one hand, by \cite[Example 3.1]{As20}  we have that 
\[
X_p=(L^{\infty}, G)_{\theta,p}
\]
where $\theta=2(p-1)/p$. On the other hand, by \cite[Example 3.2 ]{As20}
\[
(L^{\infty}, G)_{\theta,\infty}={\Exp}L^{\alpha},
\]
where  $\alpha=2/\theta$. Then  for  $\eta=2(q-1)/q$
\[
X_p=(L^{\infty}, G)_{\theta,p}\subset (L^{\infty}, G)_{\theta,\infty}\subset (L^{\infty}, G)_{\eta,q}=X_q,
\]
since $\theta<\eta$. It means the claim holds with $\alpha=p/(p-1)$.
\endproof

\begin{Remark}
Let us firstly comment why the analogous arguing does not allow to capture the case $q=p$ in Theorem \ref{main}. 
Analyzing  the above  proof one easily realizes that considered inclusion would be $(p,1)$-absolutely summing if $\mathcal{T}$ was bounded from $L^{\infty}(X_p)$ to   $L^{p}(X_p)$. Then the naive thinking is to replace the Calder\'on--Lozanovski{\v \i} construction, which does not produce $X_p$  from the couple $(L^{\infty},L^p)$, by more general interpolation method, say, the real method, and repeat the argument. Unfortunately, it happens that unlike the Calder\'on--Lozanovski{\v \i} method,  the real method of interpolation does not commute with inner spaces of mixed norm construction. 
More precisely, we know that $X_p=(L^{\infty}, G)_{\theta,p}$
and that
$$
\mathcal{T}:L^{\infty}(G)\to   L^{p}(G)
$$
as well as
$$
\mathcal{T}:L^{\infty}(L^{\infty})\to   L^{p}(L^{\infty}),
$$
but we cannot conclude from the above that 
\[
\mathcal{T}:L^{\infty}(X_p)\to   L^{p}(X_p),
\]
because one cannot represent $L^{\infty}(X_p)$ as a space of the real method of interpolation for the couple under consideration, i.e. 
\[
L^{\infty}(X_p)=L^{\infty}((L^{\infty}, G)_{\theta,p})\not = (L^{\infty}(L^{\infty}), L^{\infty}(G))_{\theta,p}
\]
(see \cite[p. 288]{Cw74}).
\end{Remark}

It appears that not only limitations of the real method of interpolation  described in the remark above  do not allow to prove boundedness of  $\mathcal{T}:L^{\infty}(X_p)\to L^{p}(X_p)$. Actually, quite surprisingly,  $\mathcal{T}$ is not bounded, as indicates the following example, inspired by Bukhvalov \cite[p. 404]{BBS02}. 

\begin{Example}\label{trbr}
For each $1<p<2$
\[
\mathcal{T}:L^{\infty}(X_p)\not \to   L^{1}(X_p).
\]
\end{Example}
\proof

Let $n\in \mathbb{N}$ and let $a_m=e^{-(m^{1/\theta})+1}$ for $m=1,2,...,2^n$ and $\theta=p-1$. 
Then we put 
\[
\frac{1}{b_m}=\left\lceil e^{(m^{1/\theta})-1}\right\rceil=\left\lceil\frac{1}{a_m}\right\rceil.
\]
Thus $a_m\geq b_m$ and $2b_m\geq a_m$ since $\frac{1}{a_m}\geq 2$ for each $m=2,3,...,2^n$.
Let further 
$$
B_{m,k}^n=\left[\frac{m-1+(k-1)b_m}{2^n}, \frac{m-1+kb_m}{2^n}\right]
$$ 
and 
\[
A_{m,k}^n=[(k-1)b_m, kb_m],
\]
for $m=1,2,...,2^n$ and $k=1,2,...,\frac{1}{b_m}$. We will consider the following sequence of simple functions 
\[
K_n(s,t)=\sum_{m=1}^{2^n}W(a_m)^{-1/p}\sum_{k=1}^{1/b_m}\chi_{B^n_{m,k}}(t)\chi_{A_{m,k}^n}(s),
\]
where $W(t)=\ln^{1-p}(et^{-1})$. 
Then for each $t\in I$ there are unique $m$ and $k$ such that $t\in B_{m,k}^n$. This yields for each $t\in I$
\[
\|K_n(\cdot,t)\|_{X_p}=\left(W(a_m)^{-1}\int_0^{b_m}dW(t)\right)^{1/p}\leq 1.
\]
Thus $\|K_n\|_{L^{\infty}(X_p)}\leq 1$. 

 Consider now the image of $K_n$ with respect to $\mathcal{T}$, i.e. 
 \[
 \mathcal{T}K_n(s,t)=\sum_{m=1}^{2^n}W(a_m)^{-1/p}\sum_{k=1}^{1/b_m}\chi_{B^n_{m,k}}(s)\chi_{A_{m,k}^n}(t).
 \]
Now, it follows from the definitions of sets $B_{m,k}^n$, $A_{m,k}^n$ and functions $K_n$, that for each $n$, the distributions of functions $\mathcal{T}K_n(\cdot,t)$ are the same for each $t\in I$. In consequence, for arbitrary $t\in I$
 \[
 \| \mathcal{T}K_n\|_{L^{1}(X_p)}= \| \mathcal{T}K_n(\cdot,t)\|_{X_p}.
 \]

We have for sufficiently large $n$
\[
\|\mathcal{T}K_n(\cdot,t)\|_{X_p}^p=\int_0^{\infty}W\Big(|\{s\in I:K_n(s,t)^p>\lambda\}|\Big)d\lambda
\]
\[
\geq \sum_{m=2}^{2^n}\int_{W(a_{m-1})^{-1}}^{W(a_m)^{-1}} W\Big(|\{s\in I:K_n(s,t)^p>\lambda\}|\Big)d\lambda
\]
\[
\geq \sum_{m=2}^{2^n}\Big[W(a_m)^{-1}-W(a_{m-1})^{-1}\Big]W\Big(2^{-n}\sum_{j=m}^{2^n}b_j\Big)
\]
\[
=\sum_{m=2}^{2^n}W\Big(2^{-n}\sum_{j=m}^{2^n}b_j\Big)\geq \sum_{m=2}^{2^n}W(2^{-n-1}a_m)
\]
\[
\geq \sum_{m=2}^{2^n}\ln^{1-p}(2^{n+1}e^{m^{1/\theta}})=\sum_{m=2}^{2^n}\Big((n+1){\ln 2}+m^{1/\theta}\Big)^{-\theta}
\]
\[
\geq \sum_{m=2}^{2^n}\frac{1}{(n+m^{1/\theta})^{\theta}}\geq \sum_{m=2}^{2^n}\frac{1}{n^{\theta}+m}\approx n-\theta\ln n\to \infty.
\]
\endproof

Concluding finally the discussion on the proof of Theorem \ref{main} we point out that actually we do not need  boundedness of $\mathcal{T}$ on the whole space $L^{\infty}(X_p)$, but rather on its subspace 
\[
Y=\overline{{\rm span}\Big\{\sum_k r_k(t)f_k(s):f_k\in X_p\Big\}}^{L^{\infty}(X_p)},
\]
and the example above does not answer the question if $\mathcal{T}$ is bounded from $Y$ to $L^{p}(X_p)$. 
At the same time, from  \cite{Cw74} it follows that 
\[
L^{p}(X_p)=L^{p}((L^{\infty}, G)_{\theta,p})= (L^{p}(L^{\infty}), L^{p}(G))_{\theta,p}
\]
and
\[
 (L^{\infty}(L^{\infty}), L^{\infty}(G))_{\theta,p} \subset L^{\infty}((L^{\infty}, G)_{\theta,p})= L^{\infty}(X_p). 
\]
Thus one could also conclude that $\mathcal{T}:Y\to L^{p}(X_p)$, if there holds
\begin{equation}\label{inc1}
Y\subset  (L^{\infty}(L^{\infty}), L^{\infty}(G))_{\theta,p}.
\end{equation}

Regardless boundedness of $\mathcal{T}$, the question remains if Theorem \ref{main} holds true with $q=p$.

\begin{conjecture}\label{c1}
Let $1<p<2$. Then the inclusion $X_p\subset L^p$ is $(p,1)$-absolutely summing. 
\end{conjecture}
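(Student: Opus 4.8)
The plan is to reduce Conjecture~\ref{c1} to a single clean inequality about signed sums and then to attack it by the transposition method, isolating exactly the point where the argument of Theorem~\ref{main} loses the endpoint. Writing $g_\epsilon=\sum_i\epsilon_i f_i$, I would first record that the conjecture is equivalent to the following \emph{Rademacher submajorization}: there is $c>0$ such that for every finite family $(f_i)\subset X_p$ one can choose signs $\epsilon_i=\pm1$ with
\[
\Big\|\sum_i\epsilon_i f_i\Big\|_{X_p}\ \ge\ c\,\Big(\sum_i\|f_i\|_{L^p}^p\Big)^{1/p}\ \approx\ c\,\Big\|\sum_i\|f_i\|_{L^p}\,r_i\Big\|_{X_p}.
\]
Indeed, from the identity $\sup_{x^*\in B_{X_p^*}}\sum_i|\langle x^*,f_i\rangle|=\max_{\epsilon}\|\sum_i\epsilon_i f_i\|_{X_p}$ together with the $p$-concavity of $L^p$ and the fact that $(r_i)$ spans $l^p$ in $X_p$ (both already used for Theorem~\ref{main}), the $(p,1)$-summability of $X_p\subset L^p$ is precisely the existence, for each $(f_i)$, of one sign pattern making the left side as large as the right. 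This is the exact $X_p$/$L^p$ analogue of the weak submajorization of Theorem~\ref{Th1}, which provides such a choice for $(L^1,L^\infty)$.

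Following the discussion after Example~\ref{trbr}, I would prove this inequality through the transposition operator, i.e. establish $\mathcal T:Y\to L^p(X_p)$, for which it suffices to verify the inclusion \eqref{inc1}, $Y\subset(L^\infty(L^\infty),L^\infty(G))_{\theta,p}$; the rest then follows by interpolating the two endpoint contractions of $\mathcal T$ with parameter $(\theta,p)$ and using $L^p(X_p)=(L^p(L^\infty),L^p(G))_{\theta,p}$ from \cite{Cw74}. For the typical element $F(s,t)=\sum_i r_i(t)f_i(s)$ of $Y$ one has $F(\cdot,t)=g_{\epsilon(t)}$ with $\epsilon(t)=(r_i(t))$ ranging over all $2^n$ sign patterns, so the K-functional of the $L^\infty$-mixed couple should satisfy $K(u,F;L^\infty(L^\infty),L^\infty(G))\approx\sup_t K(u,F(\cdot,t);L^\infty,G)=\max_\epsilon K(u,g_\epsilon;L^\infty,G)$. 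Under this (measurable-selection) identity, \eqref{inc1} becomes the scale inequality
\[
\int_0^\infty\Big(u^{-\theta}\max_\epsilon K(u,g_\epsilon;L^\infty,G)\Big)^p\,\frac{du}{u}\ \le\ C^p\max_\epsilon\int_0^\infty\Big(u^{-\theta}K(u,g_\epsilon;L^\infty,G)\Big)^p\,\frac{du}{u},
\]
whose right-hand side is $C^p\max_\epsilon\|g_\epsilon\|_{X_p}^p$; in words, the integral of the pointwise maximum over sign patterns must be controlled by the maximum of the integrals.

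To carry this out I would exploit that the $g_\epsilon$ are statistically alike: by Khintchine's inequality and the contraction principle they have comparable distributions, so the concave increasing profiles $u\mapsto K(u,g_\epsilon;L^\infty,G)$ ought to be mutually comparable in an integrated sense even if not at every fixed $u$. The natural tool is a Khintchine-type inequality in $X_p$ tying $\max_\epsilon\|g_\epsilon\|_{X_p}$ to $\|(\sum_i|f_i|^2)^{1/2}\|_{X_p}$, upgraded to a scalewise statement for the K-functional; alternatively one could try to prove an $X_p$/$L^p$ version of Theorem~\ref{Th1} directly and then integrate the resulting submajorization against the weight $dW$.

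The hard part will be exactly that the maximizing sign pattern in the displayed inequality depends on the scale $u$. The crude majorant $\max_\epsilon K(u,g_\epsilon)\le K(u,\sum_i|f_i|)$ is far too lossy---it already fails for $f_i=r_i$, where $\|\sum_i|f_i|\|_{X_p}=n$ while $\max_\epsilon\|g_\epsilon\|_{X_p}\approx n^{1/p}$---and no single function dominates the family across all scales. Converting scalewise control into one bound for the integrated $X_p$-norm is precisely the step where Theorem~\ref{Th1} yields only a \emph{weak}, $\tau$-dependent submajorization, and it is the same obstruction that forces $p<q$ in Theorem~\ref{main}. Moreover, the unboundedness of $\mathcal T$ on all of $L^\infty(X_p)$ in Example~\ref{trbr} shows this step cannot be bypassed by any scale-free mixed-norm estimate: any successful proof must genuinely use the Rademacher structure encoded in $Y$.
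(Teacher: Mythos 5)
There is a genuine gap here, and it is worth being explicit about its nature: the statement you are proving is Conjecture~\ref{c1}, which the paper itself leaves \emph{open} — the authors only obtain the weaker range $q>p$ (Theorem~\ref{main}) and the $L^1$-target case (Corollary~\ref{mainW}). Your write-up is a reduction plus a diagnosis, not a proof. The reduction steps are correct: the equivalence of $(p,1)$-summability with choosing one sign pattern $\epsilon$ so that $\|\sum_i\epsilon_i f_i\|_{X_p}\gtrsim\|\sum_i\|f_i\|_{L^p}r_i\|_{X_p}$ is immediate from $\sup_{x^*\in B_{X_p^*}}\sum_i|\langle x^*,f_i\rangle|=\max_\epsilon\|\sum_i\epsilon_i f_i\|_{X_p}$ and the $l^p$-equivalence of $(r_i)$ in $X_p$; the identification of $\|F\|_{L^\infty(X_p)}=\max_\epsilon\|g_\epsilon\|_{X_p}$ for $F=\sum_i r_i(t)f_i(s)$ is fine (all $2^n$ patterns occur, and the $K$-functional of an $L^\infty$-mixed couple is the supremum of fiberwise $K$-functionals up to a factor $2$, with no measurability issue for step functions in $t$). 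But after this bookkeeping, your displayed inequality
\[
\int_0^\infty\Big(u^{-\theta}\max_\epsilon K(u,g_\epsilon;L^\infty,G)\Big)^p\,\frac{du}{u}\ \le\ C^p\max_\epsilon\int_0^\infty\Big(u^{-\theta}K(u,g_\epsilon;L^\infty,G)\Big)^p\,\frac{du}{u}
\]
is not a lemma awaiting routine verification — it \emph{is} the conjecture (indeed formally stronger, since the left side dominates the $X_p$-norm of $\sum_i\|f_i\|_{L^p}r_i$ only after one also proves an $(L^\infty,G)$-analogue of Theorem~\ref{Th1}, which is a second unproven ingredient: Theorem~\ref{Th1} concerns the couple $(L^1,L^\infty)$, where $K(\tau,h)=\int_0^\tau h^*$, and its proof leans on the explicit formula \eqref{equ5} for that couple). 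Your proposed tools — ``Khintchine-type inequality upgraded to a scalewise statement,'' comparability of the concave profiles $u\mapsto K(u,g_\epsilon)$ ``in an integrated sense'' — are hopes, not arguments; no mechanism is given for passing the maximum over $\epsilon$ inside the $(\theta,p)$-integral when the maximizing pattern varies with $u$.

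In fact your own closing paragraph concedes this, and it accurately reproduces the paper's discussion rather than advancing past it: the paper's Theorem~\ref{Th2} succeeds precisely for Marcinkiewicz norms because there the norm is realized at a \emph{single} scale $\tau_0$, so the $\tau$-dependent signs of Theorem~\ref{Th1} suffice, whereas the $X_p$-norm integrates over all scales, which is exactly where your interchange is needed; likewise the observation that Example~\ref{trbr} does not decide \eqref{inc1} on the subspace $Y$, and that any proof must use the Rademacher structure, is already stated verbatim after Example~\ref{trbr}. So the honest assessment is: you have correctly reformulated the open problem (your ``Rademacher submajorization'' and the inclusion \eqref{inc1} restricted to $Y$ are faithful equivalent forms), but the two pivotal claims — the scalewise domination $K(u,\sum_i\|f_i\|_{L^p}r_i;L^\infty,G)\lesssim\max_\epsilon K(u,g_\epsilon;L^\infty,G)$ and the max--integral interchange — are left entirely unproven, and they constitute the whole content of Conjecture~\ref{c1}.
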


The next trivial example explains that the choice of $L^p$ in the conjecture is optimal, i.e. $L^p$ cannot be replaced by any smaller Lebesgue space.

\begin{Example}\label{example11}
For $1<p<q$ the inclusion $X_p\subset L^q$ is not $(p,1)$-summing if $q>p$.
\end{Example}
\proof
Let $f_i^k=\chi_{A_i^k}$, where $A_i^k=\left[\frac{i-1}{k},\frac{i}{k}\right)$.
Then 
\[
\max_{\epsilon_i=\pm 1}\Big\|\sum_{i=1}^k \epsilon_i f^k_i\Big\|_{X_p}=1.
\]
On the other hand
\[
\Big(\sum_{i=1}^k \|f^k_i\|_q^p\Big)^{1/p}=k^{\frac{1}{p}-\frac{1}{q}}.
\]
\endproof


Having in mind that the transposition operator acts ``nicely'' on mixed norm spaces with inner Orlicz space, we consider summability of an analogous inclusion but with  some exponential Orlicz space instead of the Lorentz space $X_p$. This time the difficulty appears on the ``concavity'' side and the following theorem will be used as a replacement of $q$-concavity of $L^p$ space ($p\leq q$) from the proof of Theorem \ref{main}.

\begin{Theorem}\label{concave}
Let $1\leq p<q<\infty$. Then there is $C>0$ such that for arbitrary finite collection $(f_k)_{k=1}^n\subset L^p$ there holds 
\begin{equation}\label{concave1}
\left\|\left(\|f_k\|_{L^p}\right)_k\right\|_{l^{q,\infty}}\leq C \left\|\left\|(f_k)_k\right\|_{l^{q,\infty}}\right\|_{L^p}.
\end{equation}
\end{Theorem}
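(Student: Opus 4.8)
The plan is to reduce \eqref{concave1} to a level-set counting inequality and to exploit the hypothesis $p<q$ solely through the summability rate of $j^{-p/q}$. Throughout write $g(x)=\|(f_k(x))_{k=1}^n\|_{l^{q,\infty}}$, so that the right-hand side of \eqref{concave1} is exactly $\|g\|_{L^p}$, and recall that for a finite scalar sequence $(a_k)$ the weak-$l^q$ quasinorm admits the two equivalent descriptions
\[
\|(a_k)\|_{l^{q,\infty}}=\sup_{j} j^{1/q} a^*_j = \sup_{\lambda>0}\lambda\,\big(\#\{k:|a_k|>\lambda\}\big)^{1/q},
\]
where $a^*_j$ is the nonincreasing rearrangement of $(|a_k|)$. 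In particular, pointwise in $x$ one has $(f(x))^*_j\le g(x)\,j^{-1/q}$ for every $j$, where $(f(x))^*_j$ denotes the nonincreasing rearrangement of $(|f_k(x)|)_{k=1}^n$.

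First I would fix $\lambda>0$ and set $N=\#\{k:\|f_k\|_{L^p}>\lambda\}$; by the displayed identity it then suffices to exhibit a constant $C$, independent of $\lambda$ and of the collection, with $\lambda N^{1/q}\le C\|g\|_{L^p}$. After relabelling, assume $\|f_k\|_{L^p}>\lambda$ for $k=1,\dots,N$. On one side I have the trivial lower bound $N\lambda^p<\sum_{k=1}^{N}\|f_k\|_{L^p}^p$. On the other side, integrating the pointwise decay and rearranging the finite sequence at each point,
\[
\sum_{k=1}^{N}\|f_k\|_{L^p}^p=\int_0^1\sum_{k=1}^{N}|f_k(x)|^p\,dx\le\int_0^1\sum_{j=1}^{N}\big((f(x))^*_j\big)^p\,dx\le \int_0^1 g(x)^p\sum_{j=1}^{N}j^{-p/q}\,dx,
\]
where the first inequality holds because the $p$-th powers of the $N$ chosen values are dominated by the $p$-th powers of the $N$ largest values of the full collection, and the second uses $(f(x))^*_j\le g(x)j^{-1/q}$.

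The decisive step is that, since $p<q$, the exponent $p/q<1$ and hence $\sum_{j=1}^{N}j^{-p/q}\le C_1 N^{1-p/q}$ for a constant $C_1=C_1(p,q)$. Combining the two bounds yields
\[
N\lambda^p\le C_1 N^{1-p/q}\|g\|_{L^p}^p,
\]
equivalently $\big(\lambda N^{1/q}\big)^p\le C_1\|g\|_{L^p}^p$, so $\lambda N^{1/q}\le C_1^{1/p}\|g\|_{L^p}$. As $C_1^{1/p}$ does not depend on $\lambda$, taking the supremum over $\lambda>0$ produces \eqref{concave1} with $C=C_1^{1/p}$.

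I expect no serious obstacle here: the one essential input is the inequality $p<q$, which furnishes the growth rate $N^{1-p/q}$ of the partial sums of $j^{-p/q}$, and this is exactly what upgrades the trivial lower bound $N\lambda^p$ to the desired weak-type estimate. The remaining ingredients — the equivalence of the two descriptions of the weak-$l^q$ quasinorm and the monotonicity of rearrangements under passage to a subcollection — are routine and influence only the value of $C$.
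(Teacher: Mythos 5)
Your proof is correct, and it takes a genuinely different route from the paper's. The paper derives \eqref{concave1} from the stronger mixed-norm inclusion $L^p(l^{q,\infty})\subset l^{q,\infty}(L^p)$, obtained by abstract interpolation: it picks $p<s<q<r$, uses the $r$- and $s$-concavity of $L^p$ to get $L^p(l^r)\subset l^r(L^p)$ and $L^p(l^s)\subset l^s(L^p)$, and then combines Triebel's identification $(l^r(L^p),l^s(L^p))_{\theta,\infty}=l^{q,\infty}(L^p)$ with Cwikel's one-sided commutation $L^p\bigl((l^r,l^s)_{\theta,\infty}\bigr)\subset (L^p(l^r),L^p(l^s))_{\theta,\infty}$, the latter valid because the outer exponent $p$ is finite. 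You replace all of this external machinery by a direct level-set computation: fixing $\lambda>0$ and counting $N=\#\{k:\|f_k\|_{L^p}>\lambda\}$, you dominate $\sum_{k\le N}\|f_k\|_{L^p}^p$ by $\int_0^1 g(x)^p\sum_{j\le N}j^{-p/q}\,dx$ via the pointwise rearrangement bound $(f(x))_j^*\le g(x)\,j^{-1/q}$, and the hypothesis $p<q$ enters exactly once, through $\sum_{j=1}^N j^{-p/q}\le C_1 N^{1-p/q}$. All steps check out, including the mildly subtle ones: the sum of $p$-th powers over the $N$ relabelled indices is indeed dominated pointwise by the sum over the $N$ largest values of the full collection, and the two descriptions of the $l^{q,\infty}$ quasinorm (via $\sup_j j^{1/q}a_j^*$ and via distribution counting) do coincide. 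What your approach buys: it is elementary and self-contained (no concavity theory, no Triebel, no Cwikel) and yields the explicit constant $C=(1-p/q)^{-1/p}$, whose blow-up as $q\downarrow p$ transparently reflects why the argument stops at $p<q$; moreover, since the constant is independent of $n$, the space-level inclusion $L^p(l^{q,\infty})\subset l^{q,\infty}(L^p)$ that the paper proves ``formally more'' follows from your estimate by monotone convergence. What the paper's approach buys: it situates the new $l^{q,\infty}$-concavity notion inside the interpolation framework used throughout the paper (the same Cwikel-type results reappear in Section \ref{transpositionsection}), and it suggests how the argument would adapt to other real-method parameters than $(\theta,\infty)$, where a bare-hands counting argument would need to be redone case by case.
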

\begin{Remark}
Notice that the claim of Theorem \ref{concave} may be read as: the space $L^p$ is $l^{q,\infty}$ concave for each $1\leq p<q<\infty$. In fact, \eqref{concave1} is a direct generalization of the classical definition of concavity, where $l^q$ space is replaced by  $l^{q,\infty}$. While convexity and concavity of Banach lattices have been widely investigated and explored for decades, it seems that such a generalization,  hasn't been considered so far, up to our knowledge. 
\end{Remark}
\proof[Proof of Theorem \ref{concave}] Fix  $1\leq p<q<\infty$. We will prove formally more, namely that 
\[
L^p(l^{q,\infty})\subset l^{q,\infty}(L^p),
\]
which applied for finite sequences of functions from $L^p$ implies the claim.
Recall that the space $L^p$ is $r$-concave for each $r\geq p$ (see \cite[p. 46]{LT79}). It gives that for $r$ and $s$ such that $p<s<q<r$, there holds
\[
L^p(l^r)\subset l^r(L^p)\ \ \ and\ \ \ L^p(l^s)\subset l^s(L^p),
\]
(see \cite[p. 46]{LT79}). 

Applying \cite[Thm. 1, p. 125]{Tr78} with $\frac{1}{q}=\frac{1-\theta}{r}+\frac{\theta}{s}$ we have
\[
(l^r(L^p),l^s(L^p))_{\theta,\infty}=l^{q,\infty}(L^p).
\]
On the other hand, by \cite[p. 288]{Cw74}, since $p<\infty$ (the second index of the real method is bigger than $p$ from the outer space), 
\[
L^p((l^r,l^s)_{\theta,\infty})\subset (L^p(l^r),L^p(l^s))_{\theta,\infty}.
\]
Finally, 
\[
L^p(l^{q,\infty}) =L^p((l^r,l^s)_{\theta,\infty})\subset (L^p(l^r),L^p(l^s))_{\theta,\infty}\subset (l^r(L^p),l^s(L^p))_{\theta,\infty}=l^{q,\infty}(L^p),
\] 
as claimed.
\endproof

\proof[Proof of Theorem \ref{mainOrlicz}]
Fix $1<p<q<2$ and let $(f_k)_{k=1}^n\subset {\Exp}L^{q'}$. We have by Theorem \ref{concave} and \cite[Example 3.2]{As20}
\[
\Big\| (\|f_k\|_{L^p})_k\Big\|_{l^{q,\infty}}\leq C \Big\| \Big\|(f_k(t))_k\Big\|_{l^{q,\infty}}\Big\|_{L^p(t)} 
\leq  CM \Big\|\Big\|\sum_k r_k(s)f_k(t)\Big\|_{{\Exp}L^{q'}(s)}\Big\|_{L^p(t)}.
\]
Applying now Lemma \ref{lemtransp2}  we get 
\[
\Big\|\Big\|\sum_k r_k(s)f_k(t)\Big\|_{ {\Exp}L^{q'}(s)}\Big\|_{L^p(t)}\leq D\Big\|\Big\|\sum_k r_k(t)f_k(s)\Big\|_{ {\Exp}L^{q'}(s)}\Big\|_{L^{\infty}(t)}
\]
and finally
\[
\Big\| (\|f_k\|_{L^p})_k\Big\|_{l^{q,\infty}}\leq CMD\max_{\epsilon_k=\pm 1}\Big\|\sum_k \epsilon_k f_k\Big\|_{ {\Exp}L^{q'}}.
\]
\endproof

Actually, following interpolation intuitions and in the light of Example \ref{example11} one could extend Conjecture \ref{c1} even further.

\begin{conjecture}\label{c3}
Let $E$ be an exact interpolation space between $l^1$ and $l^2$ given by the formula $E=(l^1,l^2)_S^K$ and let $X_E=(L^{\infty},G)_S^K$. Then embedding $X_E\subset (L^1,L^2)_S^K$ is $(E,1)$-absolutely summing.
\end{conjecture}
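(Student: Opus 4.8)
The plan is to follow the scheme of the proof of Theorem \ref{main}, but carried out ``on the diagonal'', i.e. without the loss incurred by the Orlicz sandwich $X_p\subset {\Exp}L^{\alpha}\subset X_q$. Write $Z:=(L^1,L^2)_S^K$ for the target space, so that for $E=l^p$ one has $Z=L^p$ (since $(L^1,L^2)_{\theta,p}=L^p$ matches $(l^1,l^2)_{\theta,p}=l^p$) and the assertion specialises to Conjecture \ref{c1}. Because we are dealing with $(E,1)$-summability, the quantity on the right of the defining inequality equals $\max_{\epsilon_i=\pm1}\|\sum_i\epsilon_i f_i\|_{X_E}$, so the statement reduces to
\[
\big\|(\|f_i\|_Z)_i\big\|_E\leq C\max_{\epsilon_i=\pm1}\Big\|\sum_i\epsilon_i f_i\Big\|_{X_E}
\]
for every finite family $(f_i)\subset X_E$. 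First I would pass from the left-hand side to a mixed-norm expression via a generalised concavity inequality $\|(\|f_i\|_Z)_i\|_E\leq C\,\|\,\|(f_i(\cdot))_i\|_E\,\|_Z$ (that is, $Z(E)\subset E(Z)$); then use that the Rademacher system spans a copy of $E$ in $X_E$ to rewrite the inner $E$-norm as $\|\sum_i r_i(s)f_i(t)\|_{X_E(s)}$; and finally invoke boundedness of the transposition operator $\mathcal{T}$ to turn the outer $Z$-norm into an outer $L^\infty$-norm, at which point the outer variable produces the maximum over signs.

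For the concavity step one must establish the diagonal analogue of Theorem \ref{concave}, namely $Z(E)\subset E(Z)$ with $Z=(L^1,L^2)_S^K$ and $E=(l^1,l^2)_S^K$. Here the endpoints are favourable, since $L^1(l^1)=l^1(L^1)$ and $L^2(l^2)=l^2(L^2)$ with equality, so morally applying the functor $S$ to both couples should give $Z(E)=E(Z)$. Making this rigorous is, however, exactly the point where mixed norms and real interpolation fail to commute, and — in contrast to Theorem \ref{concave}, where the outer space was the fixed $L^p$ — here both the inner and the outer space vary with $S$. I would therefore route the argument through intermediate spaces $l^r,l^s$ (respectively $L^r,L^s$) straddling the relevant indices and through Cwikel's one-sided inclusions of \cite{Cw74,Tr78}, establishing $Z(E)\subset E(Z)$ by a double application of such inclusions.

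The genuine obstacle is the transposition step, i.e. boundedness of $\mathcal{T}:L^\infty(X_E)\to Z(X_E)$. This cannot hold on all of $L^\infty(X_E)$: already for $E=l^p$ and $Z=L^1$, Example \ref{trbr} shows $\mathcal{T}:L^\infty(X_p)\not\to L^1(X_p)$. As observed after that example, however, one only needs $\mathcal{T}$ on the Rademacher-generated subspace $\mathcal{R}:=\overline{\mathrm{span}\{\sum_k r_k(t)f_k(s):f_k\in X_E\}}^{L^\infty(X_E)}$. I would attempt to prove the embedding
\[
\mathcal{R}\subset (L^\infty(L^\infty),L^\infty(G))_S^K,
\]
the analogue of \eqref{inc1}, and combine it with the expected endpoint bounds $\mathcal{T}:L^\infty(L^\infty)\to L^1(L^\infty)$ (a contraction, as $L^\infty(L^\infty)$ is transposition-invariant and embeds contractively into $L^1(L^\infty)$) and $\mathcal{T}:L^\infty(G)\to L^2(G)$ (along the Calderón--Lozanovski{\v \i} lines of Lemma \ref{lemtransp2}, now with exponent $2$). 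Interpolating these two endpoint maps by the $K$-method with parameter $S$ yields $\mathcal{T}:(L^\infty(L^\infty),L^\infty(G))_S^K\to (L^1(L^\infty),L^2(G))_S^K$, after which it remains to identify the target, i.e. to prove $(L^1(L^\infty),L^2(G))_S^K\subset Z(X_E)$, again a diagonal mixed-norm inclusion handled via \cite{Cw74}.

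I expect the hard part to be the embedding $\mathcal{R}\subset (L^\infty(L^\infty),L^\infty(G))_S^K$, since the negative Example \ref{trbr} lives precisely one step outside this subspace; the special product structure of the elements $\sum_k r_k(t)f_k(s)$ (a Rademacher expansion in the transposed variable) is what one must exploit, presumably through Khintchine-type control of the $K$-functional $K(\tau,\cdot;L^\infty(L^\infty),L^\infty(G))$ of such functions. An alternative, more abstract route would bypass the mixed-norm machinery altogether and interpolate the $(\cdot,1)$-summing inclusions forming the top and bottom rows on the left of Diagram 1 directly, deducing $((l^1,l^2)_S^K,1)$-summability of $X_E\subset Z$ from the $(1,1)$-summability of $L^\infty\subset L^1$ and the $(2,1)$-summability of $G\subset L^2$. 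Since absolutely summing operators do not form an interpolation functor in general, such an argument would again have to invoke the concrete Rademacher structure of these particular inclusions, and I expect it to meet the same transposition-type difficulty in disguise.
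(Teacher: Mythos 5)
The statement you are addressing is a \emph{conjecture} in the paper: the authors give no proof of it, and your text is likewise a programme rather than a proof. At the two decisive points you write ``I would attempt'' and ``I expect the hard part to be'', and these are precisely the points the paper itself records as open. Your embedding $\mathcal{R}\subset(L^\infty(L^\infty),L^\infty(G))_S^K$ is, for $S$ the $(\theta,p)$-parameter, literally the unproven inclusion \eqref{inc1}; the discussion following Example \ref{trbr} shows why it cannot be sidestepped, since $\mathcal{T}$ genuinely fails to be bounded from $L^\infty(X_p)$ to $L^1(X_p)$, so any argument must exploit the Rademacher product structure of the elements of $\mathcal{R}$ --- and neither you nor the paper supplies such an argument. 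Invoking ``Khintchine-type control of the $K$-functional'' names the difficulty without resolving it. So there is a genuine gap, indeed the central one: your proposal reproduces, in generalised form, exactly the obstruction that makes this a conjecture.

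Beyond that, two auxiliary steps you treat as routine are not covered by the tools you cite. Cwikel's one-sided inclusions in \cite{Cw74} concern couples with a \emph{fixed} outer exponent, $(L^p(A_0),L^p(A_1))_{\theta,q}$, whereas your target identification $(L^1(L^\infty),L^2(G))_S^K\subset Z(X_E)$ has different outer spaces at the two endpoints; this is Lions--Peetre territory, where the identification $(L^{p_0}(A_0),L^{p_1}(A_1))_{\theta,p}=L^p\left((A_0,A_1)_{\theta,p}\right)$ holds only on the diagonal $1/p=(1-\theta)/p_0+\theta/p_1$, and for a general parameter $S$ no analogue is available --- so even granting the hard embedding, this link in your chain is unsupported. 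Similarly, the concavity inclusion $Z(E)\subset E(Z)$ for general $S$ cannot be obtained by ``a double application'' of one-sided inclusions: the proof of Theorem \ref{concave} uses $r$-concavity of a fixed outer $L^p$ together with the $(\theta,\infty)$-identification from \cite{Tr78}, neither of which survives when both the inner and the outer space vary with $S$. On the positive side, your preliminary reductions are correct: the right-hand side of the $(E,1)$-summing inequality does reduce to $\max_{\epsilon_i=\pm1}\left\|\sum_i\epsilon_i f_i\right\|_{X_E}$, and for $E=l^p$ the statement does specialise to Conjecture \ref{c1} since $(L^1,L^2)_{\theta,p}=L^p$ for $\theta=2(p-1)/p$. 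But as a proof the proposal establishes nothing beyond the paper's own analysis of why the problem is open.
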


Notice that this agrees with Conjecture \ref{c1} when $E=l^p$, $1<p<2$. 

\section{Inclusions into $L^1$}\label{AS}

We begin with a sequence of technical results that will lead to the proof of Theorem \ref{Th1}. 

\begin{Lemma} \label{LL2}
For each constant $0<d<\frac{1}{2}$ and for any $N$ and $(e_k)_{k=1}^{N},(c_k)_{k=1}^{N}\subset \mathbb{R}_+$ equalities 
\[
\sum_{k=1}^{N}e_k=1\ \ and \ \ \sum_{k=1}^{N}c_ke_k=N
\]
imply that there is $1\leq k_0\leq N$ such that:
\begin{center}
($c_{k_0}e_{k_0}>d$ and $e_{k_0}< N^{-1}$) \  or ($c_{k_0}>dN$ and $e_{k_0}\geq  N^{-1}$).\end{center}
\end{Lemma}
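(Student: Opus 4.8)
The plan is to argue by contraposition: I would assume that \emph{no} index $k_0$ satisfies either of the two stated alternatives, and then derive a contradiction with the hypothesis $\sum_{k=1}^N c_k e_k = N$ by establishing the sharper bound $\sum_{k=1}^N c_k e_k \le 2dN < N$. The point of the two-case disjunction in the statement is exactly to cover the two regimes of the weight $e_k$, so the whole argument is driven by a single clean partition of the index set.

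First I would unpack the negation. If the first alternative fails at $k$, then $c_k e_k \le d$ or $e_k \ge N^{-1}$; if the second alternative fails at $k$, then $c_k \le dN$ or $e_k < N^{-1}$. Under the contrapositive assumption both of these hold for every $k$. The natural move is to split according to the size of $e_k$: set $S = \{k : e_k < N^{-1}\}$ and $L = \{k : e_k \ge N^{-1}\}$.

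Next I would read off what survives on each piece. For $k \in S$ the strict inequality $e_k < N^{-1}$ kills the second disjunct of the first condition, forcing $c_k e_k \le d$, while the second condition is then vacuously satisfied. Symmetrically, for $k \in L$ the inequality $e_k \ge N^{-1}$ kills the second disjunct of the second condition, forcing $c_k \le dN$, while the first condition is vacuously satisfied. Thus I obtain $c_k e_k \le d$ for all $k \in S$ and $c_k \le dN$ for all $k \in L$.

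Finally I would sum over the two pieces. On $S$ we have $\sum_{k\in S} c_k e_k \le d\,|S| \le dN$, using $|S| \le N$; on $L$ we have $\sum_{k\in L} c_k e_k \le dN \sum_{k\in L} e_k \le dN$, using $\sum_{k=1}^N e_k = 1$. Adding these gives $\sum_{k=1}^N c_k e_k \le 2dN < N$ since $d < \tfrac12$, which contradicts $\sum_{k=1}^N c_k e_k = N$. There is no deep obstacle in this argument; the only step demanding genuine care is the bookkeeping of the negated disjunctions, namely verifying that on each cell of the partition exactly one disjunct from each of the two conditions is forced while the complementary disjunct is automatically (vacuously) true.
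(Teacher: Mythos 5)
Your proposal is correct and is essentially the paper's own proof: both argue by contradiction, negate the disjunction, partition the indices by whether $e_k < N^{-1}$ or $e_k \ge N^{-1}$, bound the sum over the small-$e_k$ part by $d\,|S| \le dN$ and over the large-$e_k$ part by $dN\sum_k e_k \le dN$, and conclude $N \le 2dN < N$. The only difference is cosmetic --- you spell out the negated disjunctions explicitly, where the paper states the negation directly as two conditionals.
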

\proof
Let $d< \frac{1}{2}$ and $(e_k)_{k=1}^{N},(c_k)_{k=1}^{N}\subset \mathbb{R}_+$ be arbitrary and suppose for the contrary that for each $1\leq k\leq N$:
\begin{center}
(if $e_{k}< N^{-1}$ then $c_{k}e_{k}\leq d$)  and (if $e_{k}\geq  N^{-1}$ then $c_{k}\leq dN$).
\end{center}
We have 
\[
N=\sum_{k=1}^{N}c_ke_k=\sum_{\{k:e_{k}< N^{-1}\}}c_ke_k +\sum_{\{k:e_{k}\geq N^{-1}\}}c_ke_k 
\]
\[
\leq dN + \sup_{\{k:e_{k}\geq N^{-1}\}}c_k\sum_{\{k:e_{k}\geq N^{-1}\}}e_k\leq dN+dN=2dN<N.
\]
The contradiction proves the claim. 
\endproof

\begin{Lemma}\label{L1}
For arbitrary $0<d<\frac{1}{2}$, $i\in\mathbb{N}$  and any $y_1,\dots,y_i\in L^1$ there is a selection of signs $\eta'_l=\pm 1$, $l=1,\dots,i$, such that
\[
 d2^{-i}\sum_{l=1}^i\|y_l\|_1\leq \int_0^{2^{-i}}\Big(\sum_{l=1}^i\eta'_ly_l\Big)^*(t)\,dt.
\]
\end{Lemma}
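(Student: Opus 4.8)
The plan is to eliminate the two sources of difficulty one at a time. First, using the Hardy--Littlewood identity $\int_0^{2^{-i}}g^*(t)\,dt=\sup\{\int_E|g|:E\subset I,\ |E|\le 2^{-i}\}$, it suffices to exhibit signs $\eta'_l$ and a single set $E$ with $|E|\le 2^{-i}$ such that $\int_E|\sum_l\eta'_ly_l|\ge d\,2^{-i}\sum_l\|y_l\|_1$. The only obstruction to making this integral large is cancellation, both within a single $y_l$ and between the different $y_l$. I would kill both at once by insisting that $E$ lie inside a region on which \emph{every} $y_l$ keeps a constant sign: if $\mathrm{sign}(y_l)=\sigma_l$ throughout $E$ and I set $\eta'_l=\sigma_l$, then $\eta'_ly_l=|y_l|$ on $E$, so $\sum_l\eta'_ly_l=\sum_l|y_l|\ge 0$ on $E$ and the absolute value disappears.

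Concretely, write $u=\sum_{l=1}^i|y_l|$ and $S=\sum_{l=1}^i\|y_l\|_1=\int_I u$ (assuming $S>0$, else the statement is trivial, and fixing once and for all a convention $\mathrm{sign}(0)=+1$). Partition $I$ into the $2^i$ ``orthant'' sets $R_\sigma=\{x:\mathrm{sign}(y_l(x))=\sigma_l,\ l=1,\dots,i\}$ indexed by $\sigma\in\{\pm1\}^i$, and put $e_\sigma=|R_\sigma|$ and $m_\sigma=\int_{R_\sigma}u$, so that $\sum_\sigma e_\sigma=1$ and $\sum_\sigma m_\sigma=S$. The heart of the matter is a measure-versus-mass trade-off: I have plenty of total mass $S$, but it may be spread over a large set, and a set of measure $2^{-i}$ sees only a little of it unless it sits where the density of $u$ is high. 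This is exactly what Lemma~\ref{LL2} is built to resolve. With $N=2^i$ and
\[
c_\sigma=\frac{2^i\,m_\sigma}{S\,e_\sigma}\quad(\text{and }c_\sigma=0\text{ when }e_\sigma=0),
\]
one has $\sum_\sigma c_\sigma e_\sigma=\frac{2^i}{S}\sum_\sigma m_\sigma=2^i=N$, so Lemma~\ref{LL2} applies and returns an orthant $\sigma_0$ falling into one of its two alternatives.

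It then remains to read off the set $E$ from each alternative and to choose $\eta'_l=\sigma_0(l)$. In the first case ($c_{\sigma_0}e_{\sigma_0}>d$ and $e_{\sigma_0}<N^{-1}$) the inequality $c_{\sigma_0}e_{\sigma_0}>d$ unwinds to $m_{\sigma_0}>d\,2^{-i}S$, while $e_{\sigma_0}<2^{-i}$, so I simply take $E=R_{\sigma_0}$, a set of measure $<2^{-i}$ on which $\int_E u=m_{\sigma_0}>d\,2^{-i}S$. In the second case ($c_{\sigma_0}>dN$ and $e_{\sigma_0}\ge N^{-1}$) the estimate $c_{\sigma_0}>d2^i$ says the average density satisfies $m_{\sigma_0}/e_{\sigma_0}>dS$, and since $e_{\sigma_0}\ge 2^{-i}$ I may carve out $E\subset R_{\sigma_0}$ of measure exactly $2^{-i}$ on which $u$ is largest; the top part has above-average density, whence $\int_E u\ge 2^{-i}\,(m_{\sigma_0}/e_{\sigma_0})>d\,2^{-i}S$. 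In both cases $E$ lies in a single orthant, so on $E$ we get $\sum_l\eta'_ly_l=u\ge 0$, and the Hardy--Littlewood inequality gives $\int_0^{2^{-i}}\big(\sum_l\eta'_ly_l\big)^*\ge\int_E u>d\,2^{-i}S$, as required.

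The step I expect to be the genuine obstacle is not any of these computations but the recognition that the orthant decomposition is the right combinatorial object and that the weights $c_\sigma$ must be normalized so that $\sum_\sigma c_\sigma e_\sigma$ equals precisely $N=2^i$; once this is set up, Lemma~\ref{LL2} dovetails perfectly, its two alternatives corresponding exactly to the ``small orthant carrying enough mass'' and ``large orthant of high density'' scenarios. The only auxiliary facts used are the Hardy--Littlewood maximal characterization of $\int_0^\tau f^*$ and the elementary observation that the top sublevel set of $u$ has density at least the average.
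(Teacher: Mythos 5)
Your proposal is correct and takes essentially the same route as the paper's own proof: the same sign-pattern (orthant) decomposition $E_\eta$, the same application of Lemma~\ref{LL2} with $N=2^i$ (your weights $c_\sigma=2^i m_\sigma/(Se_\sigma)$ coincide with the paper's averages after its normalization $\sum_l\|y_l\|_1=2^i$), and the same two-case conclusion. The only cosmetic differences are that you normalize the weights rather than the functions, and in the large-orthant case you extract the densest subset of measure exactly $2^{-i}$ via the Hardy--Littlewood characterization of $\int_0^\tau g^*$, where the paper instead uses the equivalent monotonicity bound $\int_0^{2^{-i}}g^*(t)\,dt\ge \frac{2^{-i}}{|E_{\eta'}|}\int_{E_{\eta'}}g\,dt$.
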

\proof
Without lost of generality we may assume that 
\[
\sum_{l=1}^i\int_{0}^1|y_l(t)|\,dt=2^i.
\]
Further, for any sign arrangement $\eta=(\eta_l)_{l=1}^i$ we define the sets
$$
E_\eta:=\{t\in [0,1]:\,{\rm sign}\,y_l(t)=\eta_l,\,l=1,\dots,i\}
$$
(for definiteness, we set ${\rm sign}\,0=1$). 
Then, $\cup_{\eta} E_\eta=[0,1]$ and $E_{\eta^1}\cap E_{\eta^2}=\emptyset$ if $\eta^1\ne\eta^2$.
In consequence,
\[
\sum_{\eta}|E_{\eta}|=1.
\]
and
\[
\sum_{\eta}\int_{E_{\eta}}\Big(\sum_{l=1}^i \eta_l y_l(t)\Big)\,dt\\
=\sum_{l=1}^i\int_{0}^1|y_l(t)|\,dt=2^i.
\]
Since there is $2^i$ collections of signs $\eta$ we can apply Lemma \ref{LL2} with $N=2^i$, $e_k=|E_{\eta}|$ and 
\[
c_k=\frac{1}{|E_{\eta}|}\int_{E_{\eta}}\Big(\sum_{l=1}^i \eta_l y_l(t)\Big)\,dt.
\]
In consequence there is $\eta'$ such that:\\
(a) $|E_{\eta'}|\leq 2^{-i}$ and $\int_{E_{\eta'}}\Big(\sum_{l=1}^i \eta'_l y_l(t)\Big)\,dt\geq d$\\
or\\
(b)$|E_{\eta'}|> 2^{-i}$ and $\frac{1}{|E_{\eta'}|}\int_{E_{\eta'}}\Big(\sum_{l=1}^i \eta'_l y_l(t)\Big)\,dt\geq d2^i$.

In the first case (a) we have
\[
\int_0^{2^{-i}}\Big(\sum_{l=1}^i\eta'_ly_l\Big)^*(t)\,dt\geq\int_{E_{\eta'}}\Big(\sum_{l=1}^i \eta'_l y_l(t)\Big)\,dt\geq d2^{-i}\sum_{l=1}^i\int_{0}^1|y_l(t)|\,dt,
\]
as claimed.

In the second case (b) there holds
\[
\int_0^{2^{-i}}\Big(\sum_{l=1}^i\eta'_ly_l\Big)^*(t)\,dt\geq \frac{2^{-i}}{|E_{\eta'}|}\int_{E_{\eta'}}\Big(\sum_{l=1}^i \eta'_l y_l(t)\Big)\,dt \geq d2^{-i}\sum_{l=1}^i\int_{0}^1|y_l(t)|\,dt,
\]
thus the lemma is proved. 
\endproof

\begin{proof}[Proof of Theorem \ref{Th1}]
Let  $(g_k)_{k=1}^n\in L_1$ and to simplify further notation define $$a_k:=\|g_k\|_1 {\rm \ and \ } f_k:=\frac{g_k}{\|g_k\|_1}.$$
First, since the Rademacher functions $r_k$, $k=1,2,\dots,n$, are independent and also symmetrically and identically distributed, we may  assume that $a_1\ge a_2\ge\dots\ge a_n>0$. Moreover, thanks to concavity of the function $F(\tau)=\int_0^\tau y^*(t)\,dt$, where $y\in L_1$, it suffices to consider only the case when $\tau=2^{-i}$, $i\in\mathbb{N}$.

According to \cite[Corollary~1]{As99} (see also \cite[Corollary~7.2]{As20}), with universal constants we have that
\begin{equation}\label{equ5}
\int_0^{2^{-i}}\Big(\sum_{k=1}^n a_kr_k\Big)^*(t)\,dt\asymp 2^{-i}\Big\{\sum_{k=1}^i a_k+\sqrt{i}\Big(\sum_{k=i+1}^n a_k^2\Big)^{1/2}\Big\},\;\;i\in\mathbb{N}.
\end{equation}

Suppose that $1\le i< n$. The main idea of the proof is a splitting of the set of coefficients $a_k$, $k=1,2,\dots,n$, into $i$ groups so that each of them contains one of the largest elements $a_1,\dots,a_i$.

Let $m_0:=i$ and
$$
\sigma:=\Big(\sum_{k=i+1}^n a_k^2\Big)^{1/2}.$$
If now $m_l$, $l=1,2,\dots,r$, is defined inductively by the formula
\begin{equation}\label{equ5a}
m_l:=\max\Big\{m\le n:\,\sum_{k=m_{l-1}+1}^{m_l} a_k^2\le \frac{\sigma^2}{{i}}\Big\},
\end{equation}
then clearly $r\ge i$. We form the sets
$$
A_l:=\{j\in\mathbb{N}:\,m_{l-1}<j\le m_l\}\cup\{l\},\;l=1,\dots,i-1,$$
and
$$A_i:=\{j\in\mathbb{N}:\,m_{i-1}<j\le n\}\cup\{i\}.$$
As it is easily to see, $\bigcup_{l=1}^i A_l=\{1,2,\dots,n\}.$
Since $\|f_k\|_1=1$, $k=1,\dots,n$, then applying successively Fubini theorem, Khintchine (see \cite{Sz76}) and Minkowski inequalities, we get
\begin{eqnarray*}
\int_0^1\int_0^1\Big|\sum_{k\in A_l}a_kr_k(s)f_k(t)\Big|\,dt\,ds &=&
\int_0^1\int_0^1\Big|\sum_{k\in A_l}a_kr_k(s)f_k(t)\Big|\,ds\,dt\\ &\ge&\frac{1}{\sqrt{2}}\int_0^1\Big(\sum_{k\in A_l}a_k^2f_k(t)^2\Big)^{1/2}\,dt\\ &\ge&\frac{1}{\sqrt{2}}\Big(\sum_{k\in A_l}a_k^2\Big(\int_0^1 |f_k(t)|\,dt\Big)^2\Big)^{1/2}\\
&=& \frac{1}{\sqrt{2}}\Big(\sum_{k\in A_l}a_k^2\Big)^{1/2},\;\;l=1,\dots,i.
\end{eqnarray*}
Therefore, there are $\delta_k=\pm 1$, $k=1,2,\dots,n$, satisfying the inequality
\begin{equation}\label{equ8}
\int_0^1\Big|\sum_{k\in A_l}\delta_ka_kf_k(t)\Big|\,dt\ge
\frac{1}{\sqrt{2}}\Big(\sum_{k\in A_l}a_k^2\Big)^{1/2},\;\;l=1,\dots,i.
\end{equation}

Let us define the functions
$$
y_l:=\sum_{k\in A_l}\delta_ka_kf_k,\;\;l=1,\dots,i,$$
and also, for any sign arrangement $\eta=(\eta_l)_{l=1}^i$, the sets
$$
E_\eta:=\{t\in [0,1]:\,{\rm sign}\,y_l(t)=\eta_l,\,l=1,\dots,i\}
$$
(for definiteness, we set ${\rm sign}\,0=1$). 
Then, applying Lemma \ref{LL2} for $\epsilon_j =\delta_k \eta'_l$, $k\in A_l$, $l=1,\dots,i$, and $d=1/3$, by \eqref{equ8}, we have
\begin{eqnarray}
\int_0^{2^{-i}}\Big(\sum_{j=1}^n\epsilon_ja_jf_j\Big)^*(t)\,dt &=& \int_0^{2^{-i}}\Big(\sum_{l=1}^i\eta'_l\sum_{k\in A_l}\delta_ka_kf_k\Big)^*(t)\,dt=
\int_0^{2^{-i}}\Big(\sum_{l=1}^i\eta'_ly_l\Big)^*(t)\,dt\nonumber\\
&\geq& \frac{2^{-i}}{3}\sum_{l=1}^i\int_{0}^1|y_l(t)|\,dt \ge
\frac{2^{-i}}{3\sqrt{2}}\sum_{l=1}^{i}\Big(\sum_{k\in A_l} a_k^2\Big)^{1/2}.
\label{equ9}
\end{eqnarray}

Next, depending on the coefficients $a_j$, $j=1,2,\dots,n$, we consider two cases. Assume first that 
\begin{equation}\label{equ6}
\sum_{j=1}^i a_j \ge \frac12 \sqrt{i}\Big(\sum_{k=i+1}^n a_k^2\Big)^{1/2}.
\end{equation}
Recall that, by definition, $l\in A_l$ for each $l=1,\dots,i$. Consequently, from \eqref{equ9} it follows
$$
\int_0^{2^{-i}}\Big(\sum_{k=1}^n \epsilon_k a_kf_k\Big)^*(t)\,dt \ge 
\frac{2^{-i}}{3\sqrt{2}}\sum_{j=1}^{i} a_j.$$

Thus, in view of \eqref{equ5} and \eqref{equ6}, in this case estimate \eqref{equ4} (with a suitable universal constant) follows.

Conversely, suppose we have 
\begin{equation}\label{equ10}
\sum_{k=1}^i a_k < \frac12 \sqrt{i}\Big(\sum_{k=i+1}^n a_k^2\Big)^{1/2}.
\end{equation}
Then, since $a_j\le \frac{1}{i}\sum_{k=1}^i a_k$ for all $i<j\le n$, it holds
\begin{equation*}\label{equ11}
a_j\le\frac{\sigma}{2\sqrt{i}},\;\;i<j\le n.
\end{equation*}
Therefore, by the definition \eqref{equ5a} of the numbers $m_l$, we have
$$
\sum_{k\in A_l} a_k^2\ge\sum_{k=m_{l-1}+1}^{m_l} a_k^2\ge\frac{\sigma^2}{{i}}-\frac{\sigma^2}{4{i}}=\frac{3\sigma^2}{4{i}},\;\;l=1,\dots,i,
$$
whence
$$
\sum_{l=1}^i\Big(\sum_{k\in A_l} a_k^2\Big)^{1/2}\ge\frac{\sqrt{3}}{2}\sigma \sqrt{i}.
$$
Combining this together with inequality \eqref{equ9} and the definition of $\sigma$, we obtain
\begin{eqnarray*}
\int_0^{2^{-i}}\Big(\sum_{k=1}^n\epsilon_ka_kf_k\Big)^*(t)\,dt \ge
\frac{2^{-i}\sqrt{3}}{6\sqrt{2}}\sigma \sqrt{i}=\frac{2^{-i}\sqrt{3}}{6\sqrt{2}}\sqrt{i}\Big(\sum_{k=i+1}^{n} a_k^2\Big)^{1/2}.
\end{eqnarray*}
This estimate, \eqref{equ5} and \eqref{equ10} imply \eqref{equ4} (with a suitable universal constant). 

Thus, if $1\le i< n$, the theorem is proved. Regarding the case $i\ge n$,  it suffices to note that, by the definition of the Rademacher functions, we have 
$$
\int_0^{2^{-i}}\Big(\sum_{k=1}^n a_kr_k\Big)^*(t)\,dt= 2^{-i}\sum_{k=1}^n a_k,$$
and the proof is completed.
\end{proof}

Before we apply Theorem \ref{Th1} let us make some comments. Recall that having two functions $f,g\in L^1$ one says that $f$ is {\it submajorized} (in a sense of Hardy--Littlewood--P\'olya) by $g$ if for each $0<t\leq 1$
\[
\int_0^tf^*(s)ds\leq \int_0^tg^*(s)ds.
\]
This is usually denoted by $f\prec g$. Such a partial order plays an important role in the interpolation theory and  in each r.i. Banach function space $X$ with the Fatou property relation $f\prec g$ and $g\in X$ implies that also $f\in X$ and $\|f\|_X\leq \|g\|_X$ (we refer to \cite{BS88, KPS82} for more details). From this point of view inequality \eqref{equ4} may be seen a weak version of submajorization. More precisely, instead of having collection of signs verifying 
\[
\gamma\sum_{k=1}^n \|g_k\|_1r_k\prec \sum_{k=1}^n \epsilon_k g_k,
\]
(which cannot hold in general), we proved much less,  because  the collection of signs on the right depends on $0<\tau<1$. Yet, it appears to be enough to conclude the corresponding norm inequality in most of r.i. spaces.

\begin{Theorem}\label{Th2}
 Let $\varphi$ be an increasing concave function on $[0,1]$, $\varphi(0)=0$ and assume that  $X$ is an r.i. Banach function space such that $G\subset X$. There is a constant $c>0$ such that for all $n\in\mathbb{N}$ and
\begin{itemize}
\item[(i)] for each $(f_k)_{k=1}^n\subset M_{\varphi}$
\begin{equation}\label{equ12}
\Big\|\sum_{k=1}^n \left\|f_k\right\|_{L^1}r_k\Big\|_{ M_{\varphi}}\leq c\max_{\epsilon_k=\pm 1}\Big\|\sum_{k=1}^n \epsilon_k f_k\Big\|_{ M_{\varphi}},
\end{equation}
\item[(ii)] for each $(f_k)_{k=1}^n\subset X$
\begin{equation}
\Big\|\sum_{k=1}^n \left\|f_k\right\|_{L^1}r_k\Big\|_{X}\leq c\max_{\epsilon_k=\pm 1}\Big\|\sum_{k=1}^n \epsilon_k f_k\Big\|_{X}.
\end{equation}
\end{itemize}
\end{Theorem}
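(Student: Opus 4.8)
The plan is to obtain part (i) directly from the supremum structure of the Marcinkiewicz norm, and to reduce part (ii) to a single scale via the Rodin--Semenov theorem; in both cases Theorem \ref{Th1} is the only substantial input.

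\emph{Part (i).} Recall from \eqref{Marcinkiewicz} that $\|h\|_{M_\varphi}=\sup_{0<\tau\le 1}\frac{\varphi(\tau)}{\tau}\int_0^\tau h^*(s)\,ds$. I would fix $(f_k)_{k=1}^n\subset M_\varphi$ together with a single value $\tau$, and apply Theorem \ref{Th1} with $g_k=f_k$ at that $\tau$; this furnishes signs $\epsilon_k=\pm1$ (depending on $\tau$) for which $\gamma\int_0^\tau(\sum_k\|f_k\|_1 r_k)^*\le\int_0^\tau(\sum_k\epsilon_k f_k)^*$. Multiplying by $\varphi(\tau)/\tau$ and reading the right-hand side through the definition of the Marcinkiewicz norm, the quantity $\frac{\varphi(\tau)}{\tau}\int_0^\tau(\sum_k\|f_k\|_1 r_k)^*$ is bounded by $\gamma^{-1}\max_{\epsilon_k=\pm1}\|\sum_k\epsilon_k f_k\|_{M_\varphi}$. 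Since this bound no longer involves $\tau$, taking the supremum over $\tau$ on the left yields \eqref{equ12} with the universal constant $c=\gamma^{-1}$. The decisive observation is that the $\tau$--dependence of the signs in Theorem \ref{Th1} is harmless here, precisely because the Marcinkiewicz norm is itself a supremum over $\tau$ of the functionals occurring in \eqref{equ4}.

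\emph{Part (ii).} For a general $X$ the norm is no longer a supremum of such functionals, so the $\tau$--dependent signs cannot be glued together; this is the main obstacle and the reason the two cases are treated separately. The remedy is to collapse the left-hand side to a single scale. The left-hand side is the Rademacher sum with coefficients $a_k:=\|f_k\|_1$, and since $G\subseteq X$ the Rodin--Semenov theorem gives $\|\sum_k a_k r_k\|_X\asymp(\sum_k a_k^2)^{1/2}\asymp\|\sum_k a_k r_k\|_{L^1}$, the last equivalence being Khintchine's inequality in $L^1$. Hence, up to constants depending only on $X$, it suffices to estimate $\|\sum_k a_k r_k\|_{L^1}$. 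Now I would invoke Theorem \ref{Th1} a single time, at $\tau=1$, where $\int_0^1 y^*=\|y\|_1$; this produces one fixed choice of signs $\epsilon_k$ with $\gamma\|\sum_k a_k r_k\|_{L^1}\le\|\sum_k\epsilon_k f_k\|_{L^1}$. Finally I would lift the right-hand side from $L^1$ to $X$ through the continuous embedding $X\hookrightarrow L^1$ (valid for every r.i. space on $[0,1]$), $\|\cdot\|_{L^1}\le C_X\|\cdot\|_X$, and bound by the maximum over signs. Chaining these estimates gives the assertion with a constant $c$ that enters only through the Rodin--Semenov and embedding constants of $X$, and is in particular independent of $n$ and of the $f_k$; checking this uniformity in $n$ is the single routine verification, and no step beyond Theorem \ref{Th1}, the Rodin--Semenov theorem and the elementary embedding $X\hookrightarrow L^1$ is required.
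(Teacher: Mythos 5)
Your proof is correct, and while part (i) matches the paper, part (ii) takes a genuinely different route. In part (i) the paper first fixes a near-optimal $\tau_0$ realizing the Marcinkiewicz supremum up to a factor $2$ and applies Theorem \ref{Th1} only at $\tau_0$, whereas you apply it at every $\tau$ and take the supremum at the end; this is the same argument trivially reordered, both resting on your (correctly identified) key observation that the maximum over signs on the right-hand side absorbs the $\tau$-dependence of the signs furnished by Theorem \ref{Th1}. In part (ii) the paper deduces the claim from part (i): it passes to the Marcinkiewicz space $M_{\varphi_X}$ generated by the fundamental function of $X$, uses the embedding $X\subset M_{\varphi_X}$ together with the Rodin--Semenov equivalence of $(r_k)$ with the unit vector basis of $\ell^2$ in both $X$ and $M_{\varphi_X}$ (both contain $G$), and then invokes \eqref{equ12} for $M_{\varphi_X}$. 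You instead collapse everything to $L^1$: Rodin--Semenov and Khintchine give
\begin{equation*}
\Big\|\sum_{k=1}^n a_k r_k\Big\|_{X}\asymp\Big(\sum_{k=1}^n a_k^2\Big)^{1/2}\asymp\Big\|\sum_{k=1}^n a_k r_k\Big\|_{L^1},\qquad a_k=\|f_k\|_{L^1},
\end{equation*}
a single application of Theorem \ref{Th1} at $\tau=1$ (where $\int_0^1 y^*(t)\,dt=\|y\|_{L^1}$) produces one fixed choice of signs, and the universal embedding $X\hookrightarrow L^1$ of r.i.\ spaces on $[0,1]$ finishes the chain, with all constants (Rodin--Semenov, Khintchine, $\gamma$, embedding) independent of $n$. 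Your route is shorter and exposes something the paper's organization hides: part (ii) does not need the fine dyadic analysis of Theorem \ref{Th1} at all, since at $\tau=1$ inequality \eqref{equ4} reduces essentially to the Khintchine--Minkowski lower bound $\max_{\epsilon_k=\pm1}\|\sum_k\epsilon_k f_k\|_{L^1}\geq \frac{1}{\sqrt{2}}\,(\sum_k\|f_k\|_{L^1}^2)^{1/2}$, which the paper itself derives inside the proof of Theorem \ref{Th1}. What the paper's detour through $M_{\varphi_X}$ buys is structural economy---part (ii) becomes a formal corollary of part (i), with the signed sum estimated in the largest r.i.\ space sharing the fundamental function of $X$ rather than in $L^1$---but for the inequality as stated the two arguments are equally valid.
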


\begin{proof}
(i) By the definition of the norm in $M_{\varphi}$, there exists $\tau_0\in (0,1]$ such that 
$$
\Big\|\sum_{k=1}^n \left\|f_k\right\|_{L^1}r_k\Big\|_{M_{\varphi}}\le \frac{2\varphi(\tau_0)}{\tau_0}\int_0^{\tau_0}\Big(\sum_{k=1}^n \left\|f_k\right\|_{L^1}r_k\Big)^*(t)\,dt.$$
Furthermore, according to Theorem \ref{Th1}, we can find $\epsilon_k=\pm 1$, $k=1,2,\dots,n$, so that
$$
\frac{2\varphi(\tau_0)}{\tau_0}\int_0^{\tau_0}\Big(\sum_{k=1}^n \left\|f_k\right\|_{L^1}r_k\Big)^*(t)\,dt
\le\frac{2\varphi(\tau_0)}{\gamma\tau_0}\int_0^{\tau_0}\Big(\sum_{k=1}^n \epsilon_k f_k\Big)^*(t)\,dt
\le\frac{2}{\gamma}\Big\|\sum_{k=1}^n \epsilon_k f_k\Big\|_{M_{\varphi}}.
$$
Combining the last inequalities, we get \eqref{equ12}.

(ii) Denote by $\varphi_X$ the fundamental function of $X$. Then, 
$X\subset M_{\varphi_X}$ \cite[Theorem II.5.7]{KPS82}. By hypothesis, the Rademacher sequence is equivalent to the unit vector basis of $l_2$ both in $X$ and in $M_{\varphi_X}$ \cite{RS75}, i.e.,
$$
\Big\|\sum_{k=1}^n a_kr_k\Big\|_{X}\asymp \Big\|\sum_{k=1}^n a_kr_k\Big\|_{M_{\varphi_X}}\asymp\|(a_k)\|_{l_2}.$$
Applying point (i) there are $\epsilon_k=\pm 1$, $k=1,2,\dots,n$, satisfying
$$
\Big\|\sum_{k=1}^n \left\|f_k\right\|_{L^1}r_k\Big\|_{M_{\varphi_X}}\leq c \Big\|\sum_{k=1}^n \epsilon_k f_k\Big\|_{M_{\varphi_X}}\leq c \Big\|\sum_{k=1}^n \epsilon_k f_k\Big\|_{X}.
$$
Summarizing the last relations, we obtain the desired inequality.
\end{proof}


\begin{Corollary}\label{mainW}
Let $1<p<2$ and $\frac{1}{p}+\frac{1}{q}=1$. Then  inclusions ${\Exp}L^{q}\subset L^1$  and $X_{p}\subset L^1$  are $(l^{p,\infty},1)$-absolutely summing.  
\end{Corollary}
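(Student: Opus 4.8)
The plan is to recast the $(l^{p,\infty},1)$-summing property of the inclusions as a norm inequality involving a maximum over signs, and then feed that inequality into Theorem \ref{Th2}. First recall the elementary duality identity, valid in any Banach space $X$: for a finite family $(f_k)_{k=1}^n\subset X$,
\[
\sup_{x^*\in B_{X^*}}\sum_{k=1}^n|\langle x^*,f_k\rangle|=\max_{\epsilon_k=\pm1}\Big\|\sum_{k=1}^n\epsilon_kf_k\Big\|_X .
\]
Hence, since the relevant operator is the inclusion $X\hookrightarrow L^1$, proving that $X\subset L^1$ is $(l^{p,\infty},1)$-summing amounts to producing a constant $C$ so that
\[
\big\|(\|f_k\|_{L^1})_k\big\|_{l^{p,\infty}}\le C\max_{\epsilon_k=\pm1}\Big\|\sum_{k=1}^n\epsilon_kf_k\Big\|_X
\]
for every finite family in $X$.

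I would handle the Orlicz space first, as it is the larger of the two. The key structural observation is that $\Exp L^{q}$ coincides, up to equivalence of norms, with a Marcinkiewicz space $M_\varphi$, where $\varphi$ is its fundamental function $\varphi(t)\asymp\log^{-1/q}(e/t)$; this is a classical identification of exponential Orlicz spaces with Marcinkiewicz spaces. Consequently Theorem \ref{Th2}(i) applies with $X=\Exp L^{q}$ (its proof uses only the Marcinkiewicz structure and Theorem \ref{Th1}, not the hypothesis $G\subset X$). Writing $a_k:=\|f_k\|_{L^1}$, it delivers
\[
\Big\|\sum_{k=1}^n a_k r_k\Big\|_{\Exp L^{q}}\asymp\Big\|\sum_{k=1}^n a_k r_k\Big\|_{M_\varphi}\le c\max_{\epsilon_k=\pm1}\Big\|\sum_{k=1}^n\epsilon_kf_k\Big\|_{\Exp L^{q}}.
\]
Since $\Exp L^{q}=\Exp L^{p'}=X_{l^{p,\infty}}$, the Rademacher system spans a copy of $l^{p,\infty}$ there, so the leftmost quantity is equivalent to $\|(a_k)_k\|_{l^{p,\infty}}$. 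Combining this with the reformulation of the previous paragraph settles the case $\Exp L^{q}\subset L^1$.

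The $X_p$ case then follows formally. From the chain established in the proof of Theorem \ref{main}, $X_p=(L^\infty,G)_{\theta,p}\subset(L^\infty,G)_{\theta,\infty}=\Exp L^{q}$ (with $\theta=2(p-1)/p$ and $q=p'$), so the inclusion $X_p\hookrightarrow\Exp L^{q}$ is bounded. Because the class of $(l^{p,\infty},1)$-summing operators forms an operator ideal, the composition
\[
X_p\hookrightarrow \Exp L^{q}\hookrightarrow L^1
\]
is again $(l^{p,\infty},1)$-summing, and this composition is exactly the direct inclusion $X_p\subset L^1$. The step I expect to require the most care is the identification of $\Exp L^{q}$ with the Marcinkiewicz space $M_\varphi$ (with equivalent norms), since it is precisely what makes Theorem \ref{Th2}(i) legitimately applicable and guarantees that the Rademacher span in $M_\varphi$ is still $l^{p,\infty}$; once this is in place the remaining arguments are bookkeeping together with the Rademacher-span facts recorded in the introduction and the ideal property of summing operators.
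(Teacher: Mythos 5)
Your proposal is correct and takes essentially the same route as the paper: reduce the $X_p$ case to ${\Exp}L^{q}$ via the bounded inclusion $X_p\subset{\Exp}L^{q}$, identify ${\Exp}L^{q}$ with the Marcinkiewicz space $M_{\varphi}$ with $\varphi(t)\asymp\log^{-1/q}(e/t)$, apply Theorem \ref{Th2}(i), and translate $\big\|\sum_k\|f_k\|_{L^1}r_k\big\|_{{\Exp}L^{q}}$ into $\|(\|f_k\|_{L^1})_k\|_{l^{p,\infty}}$ using the fact that the Rademacher system spans $l^{p,\infty}$ in ${\Exp}L^{q}$. The points you make explicit --- the sign--duality reformulation of $(l^{p,\infty},1)$-summability, the ideal property justifying the composition step, and the observation that Theorem \ref{Th2}(i) does not use the hypothesis $G\subset X$ --- are all accurate and merely left implicit in the paper's proof.
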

\proof
Of course, it is enough to prove only the first part of the claim, since $X_{p}\subset {\Exp}L^{q}$. We need to show that 
 there exists a constant $c>0 $ such that  for all $n\in\mathbb{N}$ and $(f_k)_{k=1}^n\subset {\Exp} L^q$ 
\begin{equation*}
\sup_{1\le k\le n} k^{-1/q} \sum_{i=1}^k (\|f_i\|_{L^1})^*(i)\leq c\max_{\epsilon_k=\pm 1}\Big\|\sum_{k=1}^n \epsilon_k f_k\Big\|_{{\Exp} L^q}.
\end{equation*}
By \cite[Example 3.2]{As20}
\begin{equation}\label{123}
\sup_{1\le k\le n} k^{-1/q} \sum_{i=1}^k (\|f_i\|_{L^1})^*(i)\asymp \Big\|\sum_{k=1}^n \left\|f_k\right\|_{L^1}r_k\Big\|_{{\Exp} L^q}.
\end{equation}
However, the Orlicz space ${\Exp}L^q$ coincides with the Marcinkiewicz space $M_{\varphi_p}$ for $\varphi_p(u)=\log^{1/p-1}(e/u)$, which means that we can apply Theorem \ref{Th2}(i) to get 
\begin{equation}\label{234}
\Big\|\sum_{k=1}^n \left\|f_k\right\|_{L^1}r_k\Big\|_{{\Exp}L^q}\leq c\max_{\epsilon_k=\pm 1}\Big\|\sum_{k=1}^n \epsilon_k f_k\Big\|_{{\Exp}L^q}. 
\end{equation}
Summarizing both inequalities \eqref{123} and \eqref{234}, we obtain the claim. 
\endproof


\section{Applications to embeddings of Sobolev spaces}

Recall that it was Pełczyński's question to describe absolute summablity of Sobolev embeddings and it was partially answered by the third author in \cite{Woj97}. The main idea therein was to factorize the respective embedding by Besov spaces and apply Bennett--Carl theorem. 
Having in hand continuous version of the Bennett--Carl theorem, i.e. Theorem \ref{main}, we can also conclude absolute summability of embedding of Sobolev space into Lebesgue space in the critical case. 
Here we come to a coincidence of independent interest. Namely, spaces $X_p$ appears naturally not only in the context of our considerations, but also spaces $X_p$ are optimal r.i. spaces for Sobolev embedding in the critical case. Precisely, for $m,n\in \mathbb{N}$, $2\leq m<n<2m$ there holds 
\begin{equation}\label{optimalsobolev}
W^{\frac{n}{m},m}(I^n)\subset X_{\frac{n}{m}}(I^n)
\end{equation}
and $ X_{\frac{n}{m}}(I^n)$ is the smallest r.i. space verifying inclusion \eqref{optimalsobolev}  (\cite{CP98},  cf. \cite[Example 4.2]{Pi09}).

\begin{Theorem}
Let $2\leq m<n<2m\in \mathbb{N}$. Then the inclusion 
\[
W^{\frac{n}{m},m}(I^n)\subset L^{\frac{n}{m}}(I^n)
\]
is $(q,1)$-absolutely summing for each $\frac{n}{m}<q\leq 2$. 
\end{Theorem}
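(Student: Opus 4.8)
The plan is to factorize the Sobolev embedding through the optimal r.i.\ target space $X_{n/m}$ and then to combine Theorem \ref{main} with the fact that the $(q,1)$-summing operators form an operator ideal. Write $p:=n/m$; since $2\le m<n<2m$, we have $1<p<2$, so that Theorem \ref{main} applies with this $p$. The embedding under consideration factors as
\[
W^{\frac{n}{m},m}(I^n)\xrightarrow{\ j\ } X_{p}(I^n)\xrightarrow{\ \iota\ } L^{p}(I^n),
\]
where $j$ is the Sobolev inclusion and $\iota$ is the inclusion of r.i.\ spaces.

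First I would observe that $j$ is bounded: this is exactly the optimal Sobolev embedding \eqref{optimalsobolev}. Next, I would argue that $\iota$ is $(q,1)$-absolutely summing for every $p<q<2$. Although Theorem \ref{main} is stated for the spaces on $I=[0,1]$, the cube $I^n$ equipped with (normalized) Lebesgue measure is a non-atomic probability space and hence is measure-isomorphic to $I$; this isomorphism induces an isometry between $X_p(I^n),L^p(I^n)$ and $X_p(I),L^p(I)$ which intertwines the two inclusions. Since the $(q,1)$-summing constant depends only on the norm structure, it is preserved under such an isometry, so $\iota:X_p(I^n)\to L^p(I^n)$ inherits $(q,1)$-summability for each $p<q<2$ from Theorem \ref{main}. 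Because the class of $(q,1)$-summing operators is an operator ideal (in particular, stable under composition with bounded operators), the composition $\iota\circ j$ is $(q,1)$-summing as well, which establishes the claim for $p<q<2$.

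It then remains to cover the endpoint $q=2$, and here I would invoke monotonicity of summability in the first index: if an operator is $(q_0,1)$-summing, then it is $(q_1,1)$-summing for every $q_1\ge q_0$, since $\|\cdot\|_{l^{q_1}}\le\|\cdot\|_{l^{q_0}}$ on finite sequences. Choosing any $q_0$ with $p<q_0<2$, the $(q_0,1)$-summability already obtained upgrades to $(2,1)$-summability, thereby completing the full range $\frac{n}{m}<q\le 2$.

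This is a soft composition-and-transference argument, so I do not anticipate a genuine analytic obstacle; the two points that require care are the transfer of Theorem \ref{main} from $I$ to the cube $I^n$ (handled by the measure isomorphism, under which absolute summability is invariant) and the verification that the hypothesis $1<p<2$ of Theorem \ref{main} holds, which is precisely guaranteed by $m<n<2m$.
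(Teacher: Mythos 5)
Your proof is correct and is essentially the paper's own argument: the paper likewise factors the embedding through the optimal target $X_{\frac{n}{m}}(I^n)$ via \eqref{optimalsobolev} and then invokes Theorem \ref{main} together with the ideal property of $(q,1)$-summing inclusions. If anything, you are slightly more careful than the paper's two-line proof, which cites Theorem \ref{main} directly for the whole range $\frac{n}{m}<q\leq 2$ even though that theorem is stated only for $q<2$; your explicit appeal to monotonicity of summability in the first index to capture the endpoint $q=2$, and your remark on transferring Theorem \ref{main} from $I$ to $I^n$ by a measure isomorphism, fill in exactly the details the paper leaves tacit.
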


\proof We have 
\[
W^{\frac{n}{m},m}(I^n)\subset X_{\frac{n}{m}}(I^n) \subset L^{\frac{n}{m}}(I^n)
\]
and since, by Theorem \ref{main}, the second inclusion is $(q,1)$-absolutely summing for each $\frac{n}{m}<q\leq 2$, the claim follows.
\endproof

Conjecture \ref{c1} suggests the following counterpart for Sobolev embedding. 
\begin{conjecture}\label{c5}
Let $2\leq m<n<2m\in \mathbb{N}$. The inclusion 
\[
W^{\frac{n}{m},m}(I^n)\subset L^{\frac{n}{m}}(I^n)
\]
is $(\frac{n}{m},1)$-absolutely summing. 
\end{conjecture}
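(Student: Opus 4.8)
The plan is to reduce the Sobolev statement to Conjecture \ref{c1}. By the optimal embedding \eqref{optimalsobolev} of \cite{CP98} we have the factorization
\[
W^{\frac{n}{m},m}(I^n)\hookrightarrow X_{\frac{n}{m}}(I^n)\hookrightarrow L^{\frac{n}{m}}(I^n),
\]
and, writing $p:=n/m$ (so that $1<p<2$, since $m<n<2m$), the first arrow is a bounded inclusion. As the class of $(p,1)$-summing operators is an operator ideal, the precomposition of a $(p,1)$-summing map with a bounded map is again $(p,1)$-summing; hence it suffices to prove that $X_p\subset L^p$ is $(p,1)$-summing, i.e. to establish Conjecture \ref{c1} at this value of $p$.

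First I would rerun the proof of Theorem \ref{main} with $q=p$ and isolate the single step that breaks. The concavity step survives at the endpoint: $L^p$ is $p$-concave with constant one, in fact $\big(\sum_i\|f_i\|_{L^p}^p\big)^{1/p}=\big\|\big(\sum_i|f_i|^p\big)^{1/p}\big\|_{L^p}$, and since the Rademacher system spans $\ell^p$ in $X_p$ \cite[Example 3.1]{As20} one has $\|\sum_i r_i(s)f_i(t)\|_{X_p(s)}\asymp(\sum_i|f_i(t)|^p)^{1/p}$ for each fixed $t$, so that
\[
\Big(\sum_i\|f_i\|_{L^p}^p\Big)^{1/p}\leq M\,\Big\|\Big\|\sum_i r_i(s)f_i(t)\Big\|_{X_p(s)}\Big\|_{L^p(t)}.
\]
Thus the only missing ingredient is the estimate $\big\|\big\|\sum_i r_i(s)f_i(t)\big\|_{X_p(s)}\big\|_{L^p(t)}\leq C\max_{\epsilon_i=\pm1}\big\|\sum_i\epsilon_if_i\big\|_{X_p}$, which is precisely the boundedness of $\mathcal{T}$ from $L^\infty(X_p)$ to $L^p(X_p)$ restricted to the Rademacher subspace $Y$. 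Since Example \ref{trbr} forbids this on all of $L^\infty(X_p)$, the argument must exploit the structure of $Y$ in an essential way.

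Next I would reduce $\mathcal{T}:Y\to L^p(X_p)$ to the inclusion \eqref{inc1}, i.e. $Y\subset(L^\infty(L^\infty),L^\infty(G))_{\theta,p}$ with $\theta=2(p-1)/p$. Granting this, interpolating the two contractions $\mathcal{T}:L^\infty(L^\infty)\to L^p(L^\infty)$ and $\mathcal{T}:L^\infty(G)\to L^p(G)$ and using $(L^p(L^\infty),L^p(G))_{\theta,p}=L^p(X_p)$ from \cite{Cw74} yields the required bound on $Y$. To verify \eqref{inc1} I would compute the $K$-functional of a generic $F(s,t)=\sum_k r_k(t)f_k(s)\in Y$. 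A near-optimal decomposition chosen measurably in $t$, together with the fact that the $\operatorname*{ess\,sup}_t$ splits the two $L^\infty$ endpoint norms up to a factor two, gives
\[
K\big(\tau,F;L^\infty(L^\infty),L^\infty(G)\big)\asymp\operatorname*{ess\,sup}_{t}K\big(\tau,F(\cdot,t);L^\infty,G\big)=\max_{\epsilon=\pm1}K\big(\tau,\textstyle\sum_k\epsilon_kf_k;L^\infty,G\big),
\]
the last equality because $(r_1(t),\dots,r_n(t))$ realizes every sign pattern on a set of positive measure. Writing $S_\epsilon:=\sum_k\epsilon_kf_k$ and using $X_p=(L^\infty,G)_{\theta,p}$, membership \eqref{inc1} is then equivalent to the reverse interchange inequality
\[
\int_0^\infty\Big(\tau^{-\theta}\max_{\epsilon}K(\tau,S_\epsilon)\Big)^p\frac{d\tau}{\tau}\leq C\,\max_{\epsilon}\int_0^\infty\Big(\tau^{-\theta}K(\tau,S_\epsilon)\Big)^p\frac{d\tau}{\tau}=C\max_\epsilon\|S_\epsilon\|_{X_p}^p.
\]

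The hard part will be exactly this last inequality. The trivial direction ($\geq$) is immediate, so what is needed is to pull the maximum over the $2^n$ sign patterns from inside the weighted integral to the outside at the cost of only a universal constant. This cannot follow from soft interpolation — Example \ref{trbr} shows the analogous transposition fails for unstructured kernels — so a quantitative input is required guaranteeing that the sign pattern realizing $\max_\epsilon K(\tau,S_\epsilon)$ does not drift too violently with the scale $\tau$. I would attempt to supply this by combining the precise geometry of the couple $(L^\infty,G)$ (the fundamental function of $G$ is $\log^{1/2}(e/t)$, so $K(\tau,g;L^\infty,G)$ is governed by $g^*$ weighted logarithmically) with the sharp estimates for $\int_0^\tau(\sum a_kr_k)^*$ recorded in \eqref{equ5} and with the sign–selection mechanism of Theorem \ref{Th1}; these suggest that the $K$-functionals of the various $S_\epsilon$ are mutually comparable, uniformly in $\tau$, so that a single well-chosen sign pattern controls $\max_\epsilon K(\tau,S_\epsilon)$ across all scales. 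Turning this heuristic into a scale-uniform domination is the crux on which the whole statement rests.
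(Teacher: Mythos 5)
The statement you are asked to prove is stated in the paper as Conjecture \ref{c5}: the paper offers no proof of it, and your proposal does not supply one either. Your first reduction is sound and matches the paper's own framing exactly: by the optimal embedding \eqref{optimalsobolev} and the ideal property of $(p,1)$-summing operators, Conjecture \ref{c5} follows from Conjecture \ref{c1} with $p=n/m$ (this is how the paper proves its weaker theorem for $\frac{n}{m}<q\leq 2$ from Theorem \ref{main}). Likewise, your route toward Conjecture \ref{c1} through the inclusion \eqref{inc1} is precisely the avenue the paper itself sketches after Example \ref{trbr}, and your reformulation via $K\big(\tau,F;L^\infty(L^\infty),L^\infty(G)\big)\asymp\max_{\epsilon}K(\tau,S_\epsilon;L^\infty,G)$ is a reasonable (if not fully justified --- the measurable-selection step for the upper bound needs an argument) way to make \eqref{inc1} concrete. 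But everything then hinges on the ``reverse interchange inequality'' pulling $\max_\epsilon$ out of the weighted integral at the cost of a universal constant, and at that point the proposal stops being a proof: you explicitly concede it is a heuristic. That inequality \emph{is} the open content of Conjecture \ref{c1}; asserting that the $K$-functionals of the $2^n$ sums $S_\epsilon$ are ``mutually comparable, uniformly in $\tau$'' is an unproved claim, not a lemma, and nothing in the paper or in your sketch substantiates it.

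Moreover, the specific tools you invoke cannot deliver it. Theorem \ref{Th1} produces, for each fixed $\tau$, \emph{some} sign choice $\epsilon(\tau)$ satisfying \eqref{equ4}, with the choice genuinely depending on $\tau$; the paper stresses that the corresponding submajorization with a single, $\tau$-independent sign pattern ``cannot hold in general.'' So Theorem \ref{Th1} points in exactly the opposite direction from the scale-uniform domination you need. It also concerns only the $L^1$-norms $\|g_k\|_1$, i.e.\ the inclusion into $L^1$, and even there it yields only $(l^{p,\infty},1)$-summability (Corollary \ref{mainW}), strictly weaker than the $(p,1)$-summability required; estimate \eqref{equ5} likewise governs Rademacher sums with scalar coefficients, not the comparison of $K(\tau,S_\epsilon)$ across sign patterns for general $f_k\in X_p$. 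In short: your proposal is a correct and faithful reduction of Conjecture \ref{c5} to Conjecture \ref{c1}, together with a restatement of the paper's own open problem \eqref{inc1} in $K$-functional language, but the crux --- a single sign pattern (or uniform comparability of sign patterns) controlling $K(\tau,S_\epsilon)$ across all scales $\tau$ --- is missing, and no argument you give closes it.
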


\section*{Acknowledgments and Declarations}

The second named author wishes to thank Professor Lech Maligranda for indicating some of references.

The work of the first named author on Theorem \ref{Th1} was supported by the Russian Science Foundation (project no. 23-71-30001) at Lomonosov Moscow State University.

The research of the second named author was supported by the National Science Center (Narodowe Centrum Nauki), Poland (project no.~2017/26/D/ST1/00060).

The research of the third named author was supported by the National Science Center (Narodowe Centrum Nauki), Poland (project no.~2020/02/Y/ST1/00072).


\begin{thebibliography}{9999999}

\bibitem[AK06]{AK06} F. Albiac  and N. J. Kalton, {\it Topics in Banach Space Theory.} Graduate Texts in Mathematics 233, Springer-Verlag, New York, 2006.



\bibitem[As99]{As99} S.~V.~Astashkin, {\it Extraction of subsystems "majorized"\:by the Rademacher system}, Math. Notes 65 (1999), 407--417.


\bibitem[As10]{As10} S.~V.~Astashkin, {\it Rademacher functions in symmetric spaces}, J. Math. Sciences 169 (2010), no. 6, 725--886.

\bibitem[As20]{As20} S.~V.~Astashkin, {\it The Rademacher system in function spaces}, Birkh\"{a}user, 2020.

\bibitem[BS88]{BS88} C. Bennett and R. Sharpley, \textit{Interpolation of
Operators}, Academic Press, Boston 1988. 

\bibitem[Be73]{Be73} G. Bennett, {\it Inclusion mappings between $l^p$ spaces},
J. Funct. Anal. 13(1973), 20--27.

\bibitem[BBS02]{BBS02} A. Boccuto, A. V. Bukhvalov and A. R. Sambucini, {\it Some inequalities in classical spaces with mixed norms}, Positivity  6 (2002), 393--411.

\bibitem[Bu87]{Bu87} A. V. Bukhvalov, {\it Interpolation of linear operators in spaces of vector-valued functions and with mixed norm}, Sib. Math. J. 28  (1987), no. 1--2, 24--36.

\bibitem[BK91]{BK91} Yu. A. Brudny{\u \i} and N. Ya. Krugljak, {\it Interpolation Functors and Interpolation Spaces}, 
North-Holland, Amsterdam 1991.

\bibitem[Ca74]{Ca74} B. Carl, {\it Absolut-(p,1)-summierende identische Operatoren von $l^u$ in $l^v$}, Math. Nachr. 63, (1974), 353--360, (in German).


\bibitem[Cw74]{Cw74} M. Cwikel, {\it On $(L_0^p(A_0),L_1^p(A_1))_{\theta,q}$}, Proc. Amer. Math. Soc., {\bf 44} (1974), no. 2, 286--292.

\bibitem[CP98]{CP98}  M. Cwikel and E. Pustylnik, {\it Sobolev type embeddings in the limiting case,} J. Fourier
Analysis and Applications 4 (1998), 433--446.

\bibitem[DMM01]{DMM01} A. Defant, M. Mastyło and C. Michels,  {\it Summing inclusion maps between symmetric sequence spaces, a survey,} Bierstedt, Klaus D. (ed.) et al., Recent progress in functional analysis. Proceedings of the international functional analysis meeting on the occasion of the 70th birthday of Professor Manuel Valdivia, Valencia, Spain, July 3-7, 2000. Amsterdam: Elsevier. North-Holland Math. Stud. 189, 43--60 (2001).

\bibitem[DMM02]{DMM02} A. Defant, M. Mastyło and C. Michels, {\it Summing inclusion maps between symmetric sequence spaces}, Trans. Amer. Math. Soc.  354 (2002)

\bibitem[DMM02b]{DMM02b} A. Defant, M. Mastyło and C. Michels, {\it Orlicz norm estimates for eigenvalues of matrices,} Isr. J. Math. 132 (2002),  45--59.

\bibitem[DJT95]{DJT95} J. Diestel, H. Jarchow and A. Tonge, {\it Absolutely summing operators,} 
Cambridge Studies in Advanced Mathematics. 43. Cambridge: Cambridge Univ. Press. xv, 474 p. (1995).

\bibitem[Gro53]{Gro53} A. Grothendieck, {\it Resume de la theorie metrique des produits tensoriels topologiques}, 8 Bol. Soc. Mat. Sao Paulo  (1953/1956), 1--79.



\bibitem[KPS82]{KPS82} S. G. Krein, Yu. I. Petunin, E. M. Semenov, {\it Interpolation of Linear Operators}, Amer. Math. Soc., Providence, 1982; Russian version: Nauka, Moscow, 1978.

\bibitem[LT79]{LT79} J. Lindenstrauss and L. Tzafriri {\it Classical Banach Spaces, II. Function Spaces.} Springer-Verlag, Berlin, Heidelberg, New York, 1979.

\bibitem[LP68]{LP68} J. Lindenstrauss and A. Pełczyński, {\it 
Absolutely summing operators in Lp-spaces and their applications,} Stud. Math. 29  (1968), 275--326.

\bibitem[Li30]{Li30} J. E. Littlewood, {\it On bounded bilinear forms in an infinite number of variables}, Quart.
J. Math. 1 (1930), 164--174.

\bibitem[Mal89]{Mal89} L. Maligranda, \textit{Orlicz Spaces and Interpolation}, Seminars in Math. 5, University 
of Campinas, Campinas SP, Brazil 1989. 

\bibitem[MM00]{MM00} L. Maligranda and M. Mastyło, {\it Inclusion mappings between Orlicz sequence spaces,} J. Funct. Anal. 176 (2000),  no. 2, 264--279.

\bibitem[MP84]{MP-84}  M.~B.~Marcus and G.~Pisier, {\it Characterizations of almost surely continuous $p$-stable random Fourier series and strongly stationary processes}, Acta Math. 152 (1984), 245--301.


\bibitem[Ni85]{Ni85} P. Nilsson, {\it Interpolation of Banach lattices}, Studia Math. 82 (1985), 135--154.

\bibitem[Or33]{Or33} W. Orlicz, {\it Uber unbedingteKonvergenz in Funktionenraumen}, 4 Studia Math. (1933), 33--37.




\bibitem[Pi09]{Pi09} L. Pick, {\it Optimality of function spaces in Sobolev embeddings,}
Maz’ya, Vladimir (ed.), Sobolev spaces in mathematics. I: Sobolev type inequalities. New York, NY: Springer; Novosibirsk: Tamara Rozhkovskaya Publisher, International Mathematical Series 8 (2009), 249--280.

\bibitem[RS75]{RS75} V.~A.~Rodin and E.~M.~Semenov,  {\it Rademacher series in symmetric spaces}, Anal. Math.  1 (1975), 207--222.



\bibitem[Sz76]{Sz76} S. J. Szarek,  {\it On the best constant in the Khintchine inequality,} Studia Math. 58 (1976), 197--208.

\bibitem[TJ89]{TJ89} N. Tomczak-Jaegermann,  {\it Banach--Mazur distances and finite-dimensional operator ideals}, Pitman Monographs and Surveys in Pure and Applied Mathematics, 38, New York, 1989.

\bibitem[Tr78]{Tr78} H. Triebel,  {\it Interpolation theory. Function spaces. Differential operators,} Berlin: Deutscher Verlag des Wissenschaften,  1978.

\bibitem[Woj97]{Woj97} M. Wojciechowski, {\it On the summing property of the Sobolev embedding operators}, Positivity 1 (1997), No. 2, 165--170.


\end{thebibliography}
\end{document}